\newcommand{\comment}[1]{}
\newcommand{\complexes}{{\bf C}}
\providecommand{\qed}{\vrule height 6pt depth 0pt width 3 pt}
\newcommand{\reals}{{\bf R}}
\newcommand{\BibTeX}{{\rm B\kern-.05em{\sc i\kern-.025em b}\kern-.08em     
    T\kern-.1667em\lower.7ex\hbox{E}\kern-.125emX}}
\newcommand{\note}[1]{}
\renewcommand\marginpar[1]{}
\newcommand{\cauchy}{{\cal C}}
\newcommand{\scatter}{{\cal R}}
\newcommand{\dotarg}{\hskip .08em \cdot\hskip .08em} 
\newenvironment{proof}[1][Proof]{\begin{trivlist}\item[\hskip \labelsep
{\it #1. }]}{\hfill \qed \goodbreak \end{trivlist}}   
\numberwithin{equation}{section} 
\newtheorem{theorem}[equation]{Theorem}
\newtheorem{proposition}[equation]{Proposition}
\newtheorem{lemma}[equation]{Lemma}
\renewcommand{\thetheorem}{\arabic{section}.\arabic{theorem}}
\renewcommand{\theequation}{\arabic{section}.\arabic{equation}}
\begin{document}
\title{Action of a scattering map on weighted Sobolev spaces in the plane}

\author{
 R.M.~Brown\footnote{
Russell Brown   is  partially supported by a grant from the Simons
Foundation (\#195075).
} \\ Department of Mathematics\\ University of Kentucky \\
Lexington, KY 40506-0027, USA
\and
K.A.~Ott\footnote{Katharine Ott is partially supported by a grant from the
  U.S.~National Science Foundation, 
  DMS 1201104.} 
\\Department of Mathematics\\Bates College\\Lewiston, ME 04240, USA
\and
P.A.~Perry
\footnote{Peter Perry is partially supported by a grant from
  U.S.~National Science Foundation, DMS 1208778.}
\\ Department of Mathematics\\ University of Kentucky \\
Lexington, KY 40506-0027, USA
}

\date{}
 
\begin{abstract}
We consider a scattering map  that arises in the $ \bar \partial $
approach to the scattering theory for the Davey-Stewartson II  equation
and show that the map is an invertible map between certain weighted
$L^2$ Sobolev spaces. 
\end{abstract}
\maketitle

\section{Introduction}\label{Introduction}
This note considers a map ${ \cal R }$ that arises in the scattering
theory for a first-order system (\ref{dbar}) in the plane. The map $ {\cal R}$
takes a potential $q$ to scattering data that appears as a coefficient
in the asymptotic expansion of solutions to the first-order system.
The map $\scatter$ also arises in the so-called $ \bar \partial$
approach to scattering theory for the Davey-Stewartson II equation.  The
scattering transform we discuss below first arose in work of Fokas and
Ablowitz \cite{FA:1984} as a transformation
to convert one of the Davey-Stewartson equations to a linear evolution
equation. This same transform also proved to be useful in the work of
Brown and Uhlmann \cite{BU:1996} and Barcel\'o, Barcel\'o and Ruiz
\cite{BBR:2001} who investigated the inverse conductivity problem with
less regular coefficients in planar domains.

The formal theory and the beginning of a rigorous treatment of the
scattering transform may be found in work of Fokas and Ablowitz
\cite{FA:1984} and Beals and Coifman \cite{BC:1988}. One remarkable feature
of Beals and Coifman is that the map ${\cal R}$ satisfies
a Plancherel identity, 
$$
\int _ { \complexes} |q|^2 \, dx = \int_{\complexes} |{\cal R}(q) |^2
\, dk, 
$$
at least for potentials $q$ that  are sufficiently regular. Since the
map ${\cal R}$ is not linear, this identity does not imply the
continuity of the map ${\cal R}$  or even that $ \scatter (
q) $ is  defined for all $q$ in $L^2$.  
There are several authors who have established continuity of the
transform on other spaces.   Sung
\cite{LS:1994a,LS:1994b,LS:1994c} 
 develops estimates for the scattering transform in the Schwartz space. Brown \cite{RB:2001b} establishes that the map ${\cal R}$ is continuous in a
neighborhood of  the origin in $L^2$. Perry \cite{PP:2011a} considers the map on a
weighted Sobolev space $H^{ 1,1}$ and shows that $ {\cal R}$ is
locally Lipschitz continuous on this space.  The weighted Sobolev spaces   $H^ {
  \alpha, \beta}$ are defined as 
$$
H^ { \alpha, \beta } = \{ f : \langle D \rangle ^ \alpha f \in L^2
\mbox{ and }  \langle \cdot
\rangle ^ \beta  f  \in L^2\}.$$
We use $ \langle x \rangle $ to denote $ \langle x \rangle = ( 1+|x|^2
) ^ { 1/2}$ and then $ \langle D\rangle^\beta$ is the Fourier multiplier
operator $\langle D \rangle ^ \beta f = ( \langle \dotarg \rangle ^
\beta \hat f ) \check {}$. Astala, Faraco, and Rogers \cite{MR3382582}
consider the map ${\cal R}$ on the space $ H^ { \alpha, \alpha} $ with
$ \alpha >0$ and show that ${\cal R } : H^ { \alpha, \alpha}
\rightarrow L^2$ is locally Lipschitz continuous. The main result of
the current work is to show that $ {\cal R } : H^ { \alpha, \beta
}\rightarrow H ^ { \beta, \alpha}$ when $ 0 < \alpha, \beta < 1$ and
that the map ${ \cal R}$ is locally Lipschitz continuous.   We also
give a result which shows that in the spaces $H ^ {\alpha,
  \beta}$, the difference $ \scatter (q) -\hat q$ is better
behaved than $\hat q$. In particular, if $q$ is in $ H ^ { \alpha, \beta}$, then $
\scatter (q) - \hat q$ will lie in $ H ^ { 2 \beta, 2\alpha}$, at
least for $ \alpha, \beta < 1/2$. 

Our  result in this  paper is a
two-dimensional analogue of the results of X.~Zhou 
\cite{MR1617249}  which give mapping properties of a scattering
transform for the ZS-AKNS system on weighted Sobolev spaces on the real line.

We begin our development by sketching the definition of the map ${\cal
  R}$. We let $q$ be a function on the complex plane and for much of
the argument we will assume that $q$ is in the Schwartz class
${\cal S} (\complexes)$. We will establish estimates on the map $ {\cal
  R}$ with constants that depend only on the norm of $q$ in a weighted
Sobolev space. With these estimates it will be possible to extend the
map $ {\cal R} $ from the Schwartz space to $ H ^ { \alpha,
  \beta}$ with $ \alpha >0$ and $ \beta >0$.  Throughout the paper, we
 will use $ e _k (x) = e_x(k) = \exp ( \bar k \bar x - kx
)$.  We consider the system 
\begin{equation}
\label{dbar}
\left \{
\begin{aligned}
&\bar \partial \mu _ 1(x,k)= \frac 1 2 e_k(x) q(x)  \bar \mu _2 (x,k) \\
&\bar \partial \mu _2 ( x,k) = \frac 1 2 e_k (x) q(x) \bar \mu _1
(x,k) \\
&\lim _ { |x| \rightarrow \infty } ( \mu _ 1(x,k) , \mu _2 (x,k) ) = (
1,0).
\end{aligned}
\right. 
\end{equation}
Above, 
 $ \bar \partial = \frac 1 2 (\frac \partial { \partial x_
  1}  + i \frac \partial { \partial x _ 2 } ) $ and 
$  \partial = \frac 1 2 (\frac \partial { \partial x_
  1}  - i \frac \partial { \partial x _ 2 } ) $   denote the
standard derivatives  with respect to the complex variables $ \bar
x$ and $x$. When we need to differentiate with respect to $ k$ and $
\bar k$, we will write $ \partial / \partial k $ and $ \partial /
 \partial \bar k$. 

We define our scattering transform  $ {\cal R}$ by 
$$
{\cal R }( q) (k) =  \frac 1 \pi \int _ \complexes e _k (x) q(x) \bar \mu _ 1 ( x
,k ) \, dx
.
$$
Since the function $ \mu _1$ approaches one at infinity, it is
plausible that $ {\cal R } $ is a non-linear generalization of the
Fourier transform. To make this more precise, we  introduce a variant
of the Fourier transform that we will use throughout the paper. 
For 
a function $ \psi $ in $  L^1$, we define our Fourier transform by 
\begin{equation*}
\begin{split}
\hat \psi (k) & = \frac 1 \pi \int _ {\complexes } e _k (x) \psi (x) \,
dx \\
& = \frac 1 \pi \int _ {\complexes } e^ { -2i (k_1 x _2 + k_2 x_1)} \psi (x) \,
dx  . 
\end{split}
\end{equation*}
From the second expression it is clear how our Fourier transform is
related to more common normalizations of the Fourier transform. 
With our definition, the   Fourier transform is the linearization at 0
of the scattering transform.  For convenience, we list several
standard properties of the Fourier transform  translated to our
normalization. If we put
$$
\check u ( x ) =   \frac 1 \pi \int _ { \complexes } e _k (-x) u(k) 
\, dk , 
$$
 the Fourier inversion formula reads 
$$
\check { \hat u } = \hat { \check u } = u,
$$
at least for $u$ in the Schwartz class. If we let $ f* g$
denote the convolution,  we have that 
\begin{equation} 
( f*g) \hat (k ) =  \pi \hat f(k) \hat g(k) 
\end{equation}
and 
\begin{equation}
( fg ) \hat (k) =  \frac 1 \pi( \hat f * \hat g) ( k) .
\end{equation}
We recall that the Cauchy transform, 
$\cauchy$, defined by 
$$
{\cal C}f(x) = \frac 1 \pi \int _ { \complexes } \frac { f(y) } {x- y  } \,
dy 
$$
gives a right inverse to the operator $ \bar \partial$. 
Using that $ \widehat {( \bar \partial f ) } (k) =  \bar k \hat f (k)
$,  $ \widehat {( \partial f ) } (k) = - k \hat f (k) $ and the
representation of $ \bar \partial ^ { -1}$ and a similar
representation of $ \partial ^ { -1}$, 
we obtain that  $ (1/z)\hat {} = 1 / \bar k$ and $ (1/\bar z )\hat {} =
-1/k$. From these observations, we obtain
\begin{align}
\label{C1} \frac 1 \pi \int _ { \complexes } \frac { e _k (y) q(y) } {
   \bar y - \bar x } \, dy & = \frac 1 \pi \int _ { \complexes } 
\frac { \hat q (\ell ) e _ x ( k - \ell) } { k - \ell } \, d\ell \\
\label{C2} \frac 1 \pi \int _ { \complexes } \frac { e _k (-y) \bar q(y) } {
    y -  x } \, dy & = \frac 1 \pi \int _ { \complexes } 
\frac { \bar { \hat q} (\ell )  e _ x (  \ell-k) } { \bar k - \bar
  \ell } \, d\ell  . 
\end{align}
These formulae will be used in section \ref{bl} below. 

We are ready to outline the construction of the solutions of the
system (\ref{dbar}). 
We let $ T_k$ be the operator given by 
$$
T_k f(x) = \frac 1 2 \cauchy ( e _ k( \cdot) q \bar f )(x) 
$$
where $ q $ is the potential. Throughout this paper, we assume that $
q$ is in the Schwartz class in order to simplify the argument. As a
last step, we will use the local Lipschitz continuity of $\scatter$ 
from $ H^ { \alpha, \beta }$ to $ H^ {\beta , \alpha }$ to 
extend the map to a weighted Sobolev space. 
It   is clear that if $q$ is in $ H^ { \alpha, \beta}$ with $ \alpha
>0$ and $ \beta >0$,  $ \mu_1 $ and $ \mu
_2$ are in $L^ \infty ( \complexes ^2)$  and  are solutions of the integral equations 
\begin{align} 
\label{IE1A} \mu _1 & = 1 + T_k ( \mu_2)  \\
\label{IE1B} \mu _2 & = T_k ( \mu _1 ) 
\end{align}
then $ ( \mu _1 , \mu _2) $ are solutions of  (\ref{dbar}). 
Furthermore, if we substitute
(\ref{IE1B}) into (\ref{IE1A}) we obtain 
\begin{equation} 
\label{IE2} \mu _1 = 1 + T_k ^2 ( \mu _1) 
\end{equation}
and if $ \mu _1 $ is a solution of  (\ref{IE2}), then $( \mu _ 1 , T_k ( \mu _1 )
) = ( \mu _1, \mu _2) $  is a solution of  (\ref{dbar}). 

Finally, we observe that the map $ \scatter $ is invertible and the
inverse map $ {\cal I }$ can  be given by 
\begin{equation} 
\label{inverse}
 {\cal I }(f) = \overline {
  \scatter ( \bar f )}.
\end{equation}
The invertibility of $ \scatter $ on the Schwartz space is in the work
of Sung \cite{LS:1994a,LS:1994b,LS:1994c} and Perry \cite{PP:2011a}
gives invertibility on $H^ { 1,1}$. The estimates of 
this paper will allow us to extend the invertibility to the family of
spaces $ H^ { \alpha, \beta}$. The formula (\ref{inverse}) can be
found in the work of 
 Astala, Faraco, and Rogers \cite{MR3382582} and Perry. 

Our main result is the following theorem which gives the properties of
the scattering map.
\note{Revise to include information about the inverse map. } 
\begin{theorem}\label{scattered} 
The map $ { \cal R } $ maps $H^ { \alpha, \beta } $ to 
$ H ^ { \beta , \alpha}$ and is locally Lipschitz continuous, provided
$ 0 < \alpha, \beta < 1$. 

More precisely, if we fix $ \alpha$ and $ \beta$  in $ (0, 1)$, then
there exists an increasing function  in $ M_0$,   $ C = C( M_0)= C(M_0,
\alpha, \beta)$ so that if $\| q\|_ {H^{ \alpha, \beta } } \leq M_0$ and 
$\| q'\|_ {H^{ \alpha, \beta } } \leq M_0$  then 
$$
\| \scatter ( q ) - \scatter ( q') \| _ { H ^ {\beta,  \alpha} }
\leq C \| q -  q' \| _ { H ^ {\alpha, \beta}}.
$$

In addition, if $ \alpha , \beta $ are in the interval $(0, 1/2)$,
then 
$$
\| \scatter ( q) - \hat q \| _ { H ^ { 2\beta , 2\alpha }} \leq C (
M_0), 
\qquad \mbox{if } \| q \| _ { H ^ { \alpha, \beta } } \leq M _0. 
$$

Thanks to the identity (\ref{inverse}), the same results hold for the
inverse scattering map. 
\end{theorem}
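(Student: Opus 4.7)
The plan is to decompose $\scatter(q) = \hat q + R(q)$, where
$$R(q)(k) = \frac{1}{\pi}\int_{\complexes} e_k(x)\,q(x)\,\overline{(\mu_1(x,k) - 1)}\,dx,$$
and handle the linear and nonlinear pieces separately. With the normalization in use, the Fourier transform satisfies $\widehat{\bar\partial f}(k)=\bar k\,\hat f(k)$ and $\widehat{\bar x f}(k)=\partial_{\bar k}\hat f(k)$, so by Plancherel the map $q\mapsto\hat q$ is an isomorphism from $H^{\alpha,\beta}$ onto $H^{\beta,\alpha}$. Consequently the linear part $\hat q$ already satisfies the required bound, and Theorem~\ref{scattered} reduces to the nonlinear estimates $\|R(q)-R(q')\|_{H^{\beta,\alpha}}\le C(M_0)\|q-q'\|_{H^{\alpha,\beta}}$ and, in the range $\alpha,\beta\in(0,1/2)$, the improved bound $\|R(q)\|_{H^{2\beta,2\alpha}}\le C(M_0)$.

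To treat $R(q)$, substitute the identity $\mu_1-1=T_k^2\mu_1$ from (\ref{IE2}) into the definition of $R$. A preliminary \emph{a priori} estimate, obtained by inverting $I-T_k^2$ uniformly in $k$ (via a Neumann series for large $|k|$ together with a Fredholm/compactness argument for bounded $k$), yields control on $\mu_1-1$ in the mixed-norm spaces required, with constants depending only on $\|q\|_{H^{\alpha,\beta}}$. After this reduction the problem is bilinear: it suffices to estimate the leading-order object
$$B(q_1,q_2)(k) = \frac{1}{2\pi}\int_{\complexes} e_k(x)\,q_1(x)\,\overline{\cauchy(e_k q_2)(x)}\,dx$$
in $H^{2\beta,2\alpha}$ by $\|q_1\|_{H^{\alpha,\beta}}\|q_2\|_{H^{\alpha,\beta}}$, together with analogous summable bounds on the higher iterates.

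The core of the argument is this bilinear estimate. Using (\ref{C1})--(\ref{C2}), $B(q_1,q_2)(k)$ is converted into a singular convolution in the Fourier variables acting on $\hat q_1$ and $\hat q_2$. One then distributes the weight $\langle k\rangle^\alpha$ (corresponding after Plancherel to input regularity $\langle D\rangle^\alpha$) between the two factors, and applies a Kato--Ponce-type fractional Leibniz rule to share the fractional derivative $\langle D_k\rangle^\beta$. Because $B$ is genuinely bilinear, each input contributes its full weight and smoothness budget, which is precisely why the output gains a factor of two in both indices. The Lipschitz estimate then follows by telescoping $R(q)-R(q')$ and invoking a resolvent identity for $(I-T_k^2)^{-1}$, which reduces matters to the same bilinear estimate applied to various differences. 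The statement for the inverse map $\mathcal{I}$ is immediate from (\ref{inverse}), since complex conjugation preserves all the weighted Sobolev spaces involved.

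The step I expect to be the main obstacle is controlling the non-local multiplier $\langle D_k\rangle^\beta$ applied to $B(q_1,q_2)$, whose $k$-dependence enters both through the oscillatory phase $e_k(x)$ and through the singular Cauchy kernel $1/(k-\ell)$ produced after the Fourier rewrite. This will require commutator and Hardy-type estimates, and the threshold $\alpha,\beta<1/2$ in the improved bound is exactly where the Cauchy kernel becomes borderline for the weighted $L^2$ analysis of the product $\hat q_1\cdot\hat q_2$.
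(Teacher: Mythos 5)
Your overall shape --- peel off $\hat q$, construct $\mu_1$ by Fredholm inversion of $I-T_k^2$, and bound the nonlinearity by multilinear estimates --- matches the paper's, but two steps in the plan as written would fail. The reduction of the nonlinear part to ``the leading-order bilinear object together with summable bounds on the higher iterates'' presupposes that the Neumann series $\sum_j T_k^{2j}(1)$ converges, i.e.\ that $\|T_k^2\|<1$. That holds only for small potentials or, via the Astala--Faraco--Rogers decay estimate (\ref{AFR}), for $|k|$ large; for a general $q$ with $\|q\|_{H^{\alpha,\beta}}\le M_0$ it does not. The paper therefore truncates the expansion (\ref{rexp}) at a finite $N=N(\alpha,\beta)$ and must separately control the remainder $r^{(N)}$ of (\ref{remdef}), which still contains the full solution $\mu_1$. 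Showing that $r^{(N)}$ lies in $H^{\beta,\alpha}$ is the hardest part of the argument (all of Section \ref{best}): the $\langle k\rangle$-decay comes from iterating (\ref{AFR}), and the $k$-smoothness requires explicit formulae for $\partial_{\bar k}T_k^{2N}(\mu_1)$ and for commuting the factor $\bar y$ through $N$ applications of $T_k$ (Lemmas \ref{t2nd} and \ref{t2ny}), together with the weighted fractional integration bound (\ref{SW}) and the auxiliary Lemma \ref{A2}. Your proposal offers no mechanism for either the decay or the $k$-derivative of this remainder; a resolvent identity for $(I-T_k^2)^{-1}$ does not by itself produce the $\langle k\rangle^{-j\epsilon}$ gain or the differentiation formulae.

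Separately, the multilinear terms $r_j$ are $(2j+1)$-linear in $q$ --- the first nonlinear correction is trilinear, not bilinear --- and each is a chain of Cauchy kernels coupling consecutive variables. The paper bounds these by duality via Brascamp--Lieb forms whose admissibility is verified through the matroid-polytope Lemma \ref{geom}; the factor-of-two gain in the indices for $\mathcal{R}(q)-\hat q$ comes from Proposition \ref{term}, where each of the $2j+1$ copies of $q$ need only carry weight $2\alpha/(j+1)\le\alpha$ for $j\ge 1$, and the restriction $\alpha,\beta<1/2$ is exactly the requirement that the exponent $2\alpha$ stay below $1$ in (\ref{FormA}) and (\ref{FormC}). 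A Kato--Ponce fractional Leibniz rule applied to a single bilinear model does not obviously yield these $(2j+1)$-linear bounds, and the bilinear quantity $B(q_1,q_2)$ you isolate (one application of $T_k$) is not a term of the expansion, which involves only even powers $T_k^{2j}$; it enters the paper only through the auxiliary Lemma \ref{A2}. So the proposal identifies the correct scaffolding but leaves the two load-bearing components --- the multilinear form estimates and the remainder analysis --- unsupported.
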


We begin the proof of Theorem \ref{scattered} by  iterating the
integral equation (\ref{IE2}) 
for $ \mu _1 $ to obtain  finite  expansions for $\mu _1$, 
\begin{equation}
\label{exp} 
\mu _1 (x,k) = \sum _{ j = 0 } ^ { N-1} T_k ^ { 2j } ( 1) ( x) 
+ T _ k ^ { 2N } ( \mu _1 ) (x), \qquad N = 1, 2, 3, \dots .
\end{equation}
We substitute this expression for $ \mu _1$ into the definition of
$\scatter ( q) $ to obtain 
\begin{equation}
\label{rexp}
\scatter (q) (k) = \sum _ { j =0 } ^ { N-1} r _ j (k) + r ^ { (N) }
(k) , \qquad N = 1, 2, 3, \dots,
\end{equation}
where 
\begin{equation} \label{multdef}
r_ j (k ) =   \frac  1 \pi \int _ { \complexes } q (y) e _ k (y)
\bar T _ k ^ { 2j} ( 1 ) (y) \, dy 
\end{equation}
and the remainder term is given by 
\begin{equation} \label{remdef}
r ^ { (N)}(k) =  \frac 1 \pi \int_\complexes q ( y) e _ k (y) \bar T _ k ^ { 2N}
( \mu_ 1 ( \cdot, k) )(y) \, dy .
\end{equation}
The notation $ \bar T _k ^ { 2N} (f)$ is ambiguous and we intend  $
\bar T_k ^ { 2N}(f)$ to mean the complex conjugate of $T_k ^ {2N}(f)$, 
$ \bar T ^ { 2N} _k (f)  = \overline{ ( T_k ^ { 2N} (f) ) } $. 

The term $ r _0 = \hat q$ is just the Fourier transform. 
We will use duality and estimates for certain Brascamp-Lieb forms to
estimate the terms $ r _ j$ for $ j\geq 1$.  For $N$ sufficiently
large (depending on $ \alpha$ and $ \beta$) we will be able to show
that the remainder term is in $H^ { 1,1 }$. This second step is where
we require that $ \alpha $ and $ \beta $ be positive, while the
estimates for the terms  $r_j$ hold  for $ \alpha =0$ or $ \beta =
0$. 
The estimates in $L^2$ can be found in the work of one of the
authors
\cite{RB:2001b}.  Work of Nie \cite{MR2754810} and M. Christ's appendix to
Perry's paper \cite{PP:2011a} give different proofs of these estimates
for the terms $r_j$. 

Our  argument follows the argument of Perry 
 when the potentials  are in $H^ { 1,1 }$.  The innovations in this
 paper are 
new estimates for multi-linear forms and  a certain amount of
persistence that  is needed to estimate the remainder term.

The outline of this paper is as follows. In section 2, we give more
details on the construction of the special solutions $(\mu _1 , \mu
_2)$. In section 3, we prove the estimates for the multi-linear forms in (\ref{multdef})
and in section 4, we study the remainder term (\ref{remdef}).

{\em Acknowledgment. }We thank K.~Astala, D.~Faraco, and K.~Rogers
for showing us a preliminary version of their work \cite{MR3382582}. This was helpful 
in carrying the research reported below. In particular, their decay
estimate (\ref{AFR}) below is a main step in our work. 

\section{Constructing the scattering solutions}

In this section, we collect several estimates that will be needed
in section \ref{best}
 and give a construction of the solutions  $ (\mu _1, \mu_2)$
to the equations (\ref{dbar}). 

Much of our analysis will take place in $L^p$-spaces and we begin by
observing that  for $  \beta > 0$ and $ 1 > \alpha >0$, we have the inclusion
\begin{equation}
\label{embed}
H^ { \alpha, \beta} \subset L^p, \qquad \frac 1 2 - \frac \alpha 2
\leq \frac 1 p < \frac  1 2 + \frac \beta  2 .
\end{equation}
The estimate for $ p >2$ follows from the Sobolev embedding theorem
and the estimate for $p<2$ follows from the inequality of H\"older. 
We let $ I_1$ denote the fractional integral given by
$$
I_1 (f)( x) = \frac 1 \pi \int _ { \complexes } \frac { f(y)}{ |x-y|}
\, dy .
$$
For estimates involving the size of $\cauchy (f)$, the 
 elementary inequality $ |\cauchy (f) (x) | \leq I_1 (|f|)(x)$
means that it is sufficient to give estimates for $I_1 (f)$.
For  $ p\neq 2$, it is well-known (see Vekua \cite{IV:1962})  that
\begin{equation}
\label{vekua}
\| I_1(f) \| _ { L^ \infty } \leq C _p \| f\|^{1/2} _{ L^p } \| f\|^{1/2}_{L^{
    p'}} .
\end{equation}
Above, $p'$ is the usual conjugate exponent.
We recall the well-known Hardy-Littlewood-Sobolev estimate for fractional
integration which may be found in \cite{ES:1970}, for example. If $ 1 <
p < 2$, we have 
\begin{equation}\label{HLS}
\| I_1 (f) \|_ { L^ {\tilde p } } \leq C_p \|f\| _ { L^ p } , \qquad
\frac 1 { \tilde p } = \frac 1 p - \frac 1 2 .
\end{equation}
We will find it useful to work in the weighted $L^p$-spaces, 
$L ^ p _ \alpha = \{ f : \langle  \cdot \rangle ^ \alpha f \in L^ p\}$.
Occasionally, we will also use the scale-invariant or homogeneous version
of these spaces $ \dot L ^ p _ \alpha = \{ f : |\dotarg|^ \alpha f \in
L ^ p \}$. 
For $ 1 < p \leq \tilde p< \infty$ and $\alpha $, $\beta$ satisfying $
-\frac  2 { \tilde p } < \beta \leq \alpha < \frac 2 { p '}$, and
$ \frac  1 {\tilde p} = \frac  1 p - \frac { 1 - \alpha + \beta } 2 $,
we have 
\begin{equation}
\label{SW} 
\| I_ 1(f) \| _ { L^ { \tilde p } _ \beta } \leq C (p, \tilde p ,
\alpha, \beta ) \| f\| _ { L ^ p _ \alpha }.
\end{equation}
The estimate (\ref{SW})
follows easily from the work of Sawyer and Wheeden \cite[Theorem
  1]{MR1175693}. 
The corresponding estimate in the homogeneous spaces $\dot L^ p _\alpha$
is due to Stein and Weiss \cite{MR0098285}. 
Finally, we recall a result of Astala, Faraco and Rogers who show 
that for $\alpha \geq 0$    and  $ 2 < \tilde p < \infty $, we have 
\begin{equation}
\label{AFR}
\| T_k ^ 2 (f) \| _ { L ^ {\tilde p }}
\leq C \langle k \rangle ^ {
  -\alpha }
\|q\| _ { H^ { \alpha, 0  } }^2\| f\| _ { L ^{ \tilde p}}.
\end{equation}
Occasionally in the sequel, we will want to display the dependence of
$T_k$ and $ \mu_j$ on the potential $q$. We will do this by writing $
\mu _ j ( q ; \cdot, \cdot)$ and $ T_{ k, q}$. 

\begin{proposition} 
\label{MUEX}
Let $ \epsilon >0  $ and suppose that $ q \in H^ { \epsilon,
  \epsilon}$. Fix $ \tilde p_0$ with $ 1/\tilde p_0 \in (0, \epsilon /2)$. We
may construct solutions $ (\mu_1, \mu_2)$ of (\ref{dbar}) with $ ( \mu
_1 -1, \mu_2) $ in $L^ \infty _k ( L^ { \tilde p_0 } _x( \complexes
^2) )$. Furthermore, there exists a constant $C  = C( M_0) $ so that if  $ \| q
\| _ { H^ { \epsilon , \epsilon }} \leq M_0 $ and $ \| q' \|_{ H^ {
    \epsilon, \epsilon } } \leq M _0 $, then 
$$
\sup _ k \| \mu_1 ( q; \cdot, k ) - \mu _1 ( q'; \cdot, k) \| _ { L ^
  { \tilde {p_0} } } \leq C(M_0 ) \| q - q' \| _ { H ^ { \epsilon ,
    \epsilon }}.
$$
\end{proposition}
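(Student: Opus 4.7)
The plan is to solve the integral equation (\ref{IE2}), $\mu_1 = 1 + T_k^2(\mu_1)$, by setting $\nu = \mu_1 - 1$ and studying $(I - T_k^2)\nu = T_k^2(1)$ in $L^{\tilde p_0}_x$ for each fixed $k$. I will combine the AFR decay estimate (\ref{AFR}) with Fredholm theory to invert $I - T_k^2$ uniformly in $k$, and then derive the Lipschitz estimate by perturbing the fixed-point equation.

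First I would establish the uniform-in-$k$ estimates that drive the proof. Vekua's inequality (\ref{vekua}) applied to $e_k q \in L^p \cap L^{p'}$ for some $p \neq 2$ close to $2$ (available by the embedding (\ref{embed}) from $q \in H^{\epsilon,\epsilon}$) gives $T_k(1) = \frac{1}{2}\cauchy(e_k q) \in L^\infty$ uniformly in $k$, and a further application of (\ref{HLS}) to $T_k(T_k(1))$ places the source term $T_k^2(1)$ in $L^{\tilde p_0}$ uniformly in $k$. For the operator itself, the AFR estimate (\ref{AFR}) with $\alpha = \epsilon$ yields
$$
\|T_k^2\|_{L^{\tilde p_0} \to L^{\tilde p_0}} \leq C \langle k \rangle^{-\epsilon} \|q\|_{H^{\epsilon,0}}^2 \leq C M_0^2 \langle k \rangle^{-\epsilon}.
$$

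Next I would split the construction by the size of $|k|$. Choose $K = K(M_0)$ so that $C M_0^2 \langle k \rangle^{-\epsilon} \leq 1/2$ for $|k| \geq K$; then the Neumann series converges on the outer region and gives $(I - T_k^2)^{-1}$ with operator norm at most $2$. On the compact region $|k| \leq K$, I would apply the Fredholm alternative: $T_k^2$ is compact on $L^{\tilde p_0}$ (by approximating $q$ in $H^{\epsilon,\epsilon}$ by compactly supported smooth functions and exploiting both the decay and the smoothness of $q$ together with the smoothing effect of $\cauchy$), and $I - T_k^2$ is injective on $L^{\tilde p_0}$ because any $\nu \in L^{\tilde p_0}$ with $\nu = T_k^2 \nu$ bootstraps via iterated applications of $T_k^2$ to a bounded solution of the homogeneous system (\ref{dbar}), which must vanish by the uniqueness theory of Sung and Brown--Uhlmann. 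Joint continuity in $(k,q)$ together with the compactness of $\overline{\ball{0}{K}}$ then yields the uniform bound $\|(I - T_k^2)^{-1}\|_{op} \leq C(M_0)$, and $\nu = (I - T_k^2)^{-1} T_k^2(1)$ lies in $L^\infty_k(L^{\tilde p_0}_x)$ with the desired bound. The companion bound on $\mu_2 = T_k(\mu_1)$ follows from a parallel estimate.

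For the Lipschitz dependence, subtracting the fixed-point equations for $q$ and $q'$ gives
$$
(I - T_{k,q}^2)\bigl(\mu_1(q;\cdot,k) - \mu_1(q';\cdot,k)\bigr) = \bigl(T_{k,q}^2 - T_{k,q'}^2\bigr)\mu_1(q';\cdot,k),
$$
and the right-hand side is controlled by $\|q - q'\|_{H^{\epsilon,\epsilon}}$ via the same Vekua/HLS/AFR estimates applied bilinearly using $T_{k,q}^2 - T_{k,q'}^2 = T_{k,q}(T_{k,q} - T_{k,q'}) + (T_{k,q} - T_{k,q'}) T_{k,q'}$. Inverting with the uniform bound from the existence step closes the argument. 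The main obstacle is making that uniform bound hold over all $q$ in the non-compact ball $\{\|q\|_{H^{\epsilon,\epsilon}} \leq M_0\}$; this is resolved by the compact embedding of $H^{\epsilon,\epsilon}$ into $L^2$, the continuity of $q \mapsto T_{k,q}^2$ in operator norm with respect to the $L^2$ norm of $q$, and the observation that blowup of the inverse along a sequence in the ball would, after extracting a strong $L^2$ (weak $H^{\epsilon,\epsilon}$) limit, force a nontrivial kernel of $I - T_{k,q}^2$ at the limit, contradicting injectivity.
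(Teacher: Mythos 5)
Your proposal follows essentially the same strategy as the paper's proof: write $\mu_1 - 1 = (I - T_k^2)^{-1} T_k^2(1)$, invert $I - T_k^2$ by Fredholm theory (compactness of $T_k^2$ plus injectivity from the Liouville theorem for pseudo-analytic functions), use the AFR estimate (\ref{AFR}) to get a Neumann series for $|k|$ large, use compactness and joint continuity to get a uniform bound for $|k|$ in the remaining bounded region, and obtain the Lipschitz estimate from the resolvent identity. The architecture of your argument matches the paper's.

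The one spot where your sketch is thin is the assertion of ``joint continuity in $(k,q)$.'' Continuity of $q \mapsto T_{k,q}$ in operator norm is indeed controlled by $\|q-q'\|_{L^2}$, but operator-norm continuity of $k \mapsto T_{k,q}$ is \emph{not} available for $q$ merely in $L^2$: the kernel increment $|e_k(y) - e_{k'}(y)| \leq 2^\theta |k-k'|^\theta |y|^\theta$ grows with $|y|$, so one only obtains the H\"older bound
$$
\| T_{k,q} - T_{k',q} \|_{\mathcal{L}(L^{\tilde p})} \leq C |k-k'|^\theta \|q\|_{H^{0,\theta}},
$$
which requires a spatially weighted norm of $q$. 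This is precisely why the paper embeds $H^{\epsilon,\epsilon}$ compactly into $H^{0,\epsilon/2}$ (taking $\theta = \epsilon/2$) rather than into $L^2$, making $(k,q) \mapsto \|(I-T_{k,q}^2)^{-1}\|$ a continuous function on the compact set $\{|k| \leq R_0\} \times \{\|q\|_{H^{\epsilon,\epsilon}} \leq M_0\} \subset \complexes \times H^{0,\epsilon/2}$, and hence bounded. Your closing contradiction argument can be repaired by replacing the compact embedding into $L^2$ with the one into $H^{0,\epsilon/2}$ (or any $H^{0,\theta}$ with $0 < \theta < \epsilon$) and invoking the H\"older estimate in $k$ above; once you do that, the two proofs agree.
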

\note{ Can we find an estimate for $ C(M_0)$? I conjecture that our
  proof gives that $ C( R_0) \leq C e ^ {C R_0}$. }

\begin{proof}
We may assume that $ \epsilon \in (0, 1) $. 
 From H\"older's inequality and the
  Hardy-Littlewood-Sobolev inequality (\ref{HLS}) it follows that if
  $q$ is in $L^2$ and  $ \tilde p  $  is  in $(2, \infty)$,  the map $
  f \rightarrow T_k f$ is bounded on $ L^ { \tilde p}$ and we have the
  continuity result
$$
\| T_{ k, q} (f) - T_ { k, q'} (f) \| _ { L^ { \tilde p }}
\leq C \| q - q' \| _ { L^ { 2  }} \| f\| _ { L^ { \tilde p }}. 
$$
Furthermore, by approximating $ q$ in $L^2$ by functions that are
bounded and compactly supported, we can see that the map $T_k$ is
compact on $L^{ \tilde p}$. 
If we also have that $ \langle \dotarg \rangle ^ \theta q \in L^2$, then 
\begin{equation}
\label{OpDecay}
\| T_{k,q} - T_ { k', q } \| _{ {\cal L } ( L ^ { \tilde p } ) }
\leq C | k - k ' |^ \theta \| q\| _ { H ^ { 0, \theta } }, \qquad
\tilde p \in ( 2, \infty ) . 
\end{equation}
To establish (\ref{OpDecay}), observe that 
$$
|e_k (y) - e_ { k' } (y) | \leq 2 ^ \theta |k-k'|^ \theta |y |^
\theta
$$
and thus
\begin{align*} 
| T_k (f) (x) - T_{ k'} (f) (x) |  &\leq \frac 1 \pi  \int _ {
  \complexes } \frac { |f(y) | |q(y) | } { |x-y| }| e_k (y) - e_ { k '
  } ( y)| \, dy  \\
&\leq C |k-k'|^ \theta I _ 1 ( |\cdot| ^ \theta |q||f| ) (x) .
\end{align*}
Thus by the inequality of Hardy-Littlewood-Sobolev  (\ref{HLS}) and
H\"older's inequality, we have  
$$
\| T _ k (f)  - T _ { k ' } (f) \|_ { L^ { \tilde p }}\leq C |k - k '
|^ \theta \| f \| _ { L^{\tilde p }} \| q \| _ { H ^ { 0, \theta } } .
$$
In the argument below, we will apply this estimate with $  \theta =
\epsilon/2$. 

We observe that a solution of (\ref{dbar}) should also solve the
integral equations (\ref{IE2}).  According to a standard argument
using the Liouville theorem for pseudo-analytic functions (see
\cite[Section 3]{BU:1996} or
\cite{LS:1994a}), the
operator $(I - T_k ^2 )$ is injective on  $ L^ { \tilde p } $ for $ 2<
\tilde p < \infty$. Since $ T_k ^2 $ is compact, the Fredholm theory
tells us that $ ( I - T_k^2 ) ^ { -1}$ exists as an operator in $ {
  \cal L } ( L^ { \tilde p })$. The continuity of the map $ (k, q )
\rightarrow T _ { k,q } $ implies that   $\|(I- T_{ k,q } ^ 2 ) ^ {
  -1}  \| _ { { \cal L}( L^ { \tilde p } ) }$ is a continuous
function for $ ( k, q )$ in $\complexes \times H ^ { 0, \epsilon /2
}$. According to the decay estimate of \cite{MR3382582} (restated as (\ref{AFR})), given $ M _0$,
we may find $ R_0 $ so that $ \| T_k ^ 2 \|_{{ \cal L } (L^ { \tilde p }) }
  \leq  1/2$ if $ |k | \geq R _0$ and $ \| q \| _ { H ^ {
      \epsilon , \epsilon } } \leq M_0$.  The embedding $H^ {
    \epsilon, \epsilon } \subset H ^ { 0, \epsilon/2}$ is compact and
  thus the continuous function $ \| ( I - T_k ^ 2) ^ { -1} \|_{ {\cal
      L } ( L^ { \tilde p })} $ will be bounded on the compact set,
$\{ (k, q ) : |k | < R_0 , \| q \| _ { H ^ { \epsilon , \epsilon
  }}\leq M_0 \}  \subset \complexes \times H^{0,\epsilon/2}$. 

If we  use the
embedding of $ H^ {\epsilon, \epsilon }$, (\ref{embed}), and
(\ref{HLS}) we can see that 
$ T_k (1) $ lies in $ L^ \infty _k ( L^  { \tilde p _0
}_x)$. 
Thus we may set 
$$
\mu _1 ( \cdot, k ) = 1 + ( I - T_k ^ 2) ^ { -1} ( T_k (1) ) , \qquad
\mu _ 2 ( \cdot , k ) = T _k ( \mu _1 ) 
$$
and then  $  ( \mu _1, \mu _2) $ is a solution  of 
(\ref{dbar}). 

Next we observe that thanks to the continuity of the map 
$
(k, q) \rightarrow T _ { k, q } ( 1) $ from $ H^ { \epsilon , \epsilon
} $ into $ L^ { \tilde p _0}$, it follows that the map $ q \rightarrow
\mu _ 1( q; \cdot , k ) -1 $ is continuous from $ H ^ { \epsilon ,
  \epsilon } $ into $ L^ { \tilde p _0 }$.  In fact, we can write 
\begin{multline*}
\mu _ 1 ( q; \cdot, k ) - \mu _1 ( q' ;\cdot , k) 
\\ 
= ( I - T_ { k,q} ^ 2 ) ^ { -1} ( T _ { k,q} ^ 2 - T_ { k,q'} ^ 2 ) (1) 
+ (( I - T_ { k,q } ^ 2 ) ^ { -1} - ( I - T_ { k,q' } ^ 2 ) ^ { -1} )
( T_ { k, q' } ( 1) ), 
\end{multline*}
and conclude that 
$$
\| \mu _ 1( q; \cdot, k ) - \mu _ 1 ( q' ; \cdot, k ) \|_ { L^ {
    \tilde p _ 0 }} 
\leq C(R_0 ) \| q - q ' \| _ { H ^ { \epsilon, \epsilon } } , \quad
\mbox{if } \| q \| _ { H ^ {\epsilon, \epsilon } } \leq M _0 \mbox{
  and } \| q' \| _ { H ^ { \epsilon, \epsilon }} \leq M _ 0. 
$$
\end{proof}

Now that we have $ \mu _1$ with  $ \mu _1 -1 \in   L^ \infty _k ( L ^ {
  \tilde p _0})$, our next step is to show that we have $ \mu _1 -1$ in
an interval of $ L^ { \tilde p }$ spaces. 
However, before we do this, we
give a simple lemma that will be used to obtain the local  Lipschitz
continuity of multi-linear expressions.

\begin{lemma} \label{mlllc} Let $ \Lambda : X_0 \times X_1 \times \cdots \times X_N
  \rightarrow Y$ be a bounded  multi-linear operator with $ X_j$ and $Y$
  normed vector spaces. More precisely, assume that for some constant
  $ C_0$,  we have 
$$
|\Lambda ( q_1, \dots  ,  q_N ) | \leq C_0 \prod _ { j =1 } ^ N \|q _j \|_ {
  X_j } .
$$
Then we obtain 
 the following local Lipschitz
  continuity result. If $ \| q _j \|_{X_j} \leq M _0 $ and $  \| q '
  _j \| _ { X_j } \leq M_0$, $ j= 1, \ldots, N $, then 
$$
\| \Lambda ( q_1, \dots , q_N) - \Lambda ( q_1',\dots, q_N') \|_{ Y } 
\leq C_0  M_0^ { n-1} \sum _ { j =1 } ^ N \| q _ j - q ' _ j \| _ { X_j } .
$$
\end{lemma}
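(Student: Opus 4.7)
The plan is to use a standard telescoping argument, exploiting multilinearity to reduce the difference to $N$ pieces in each of which only one argument changes.

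First I would write the telescoping identity
\begin{equation*}
\Lambda(q_1,\dots,q_N) - \Lambda(q_1',\dots,q_N')
= \sum_{j=1}^{N}\bigl[\Lambda(q_1',\dots,q_{j-1}',q_j,q_{j+1},\dots,q_N) - \Lambda(q_1',\dots,q_{j-1}',q_j',q_{j+1},\dots,q_N)\bigr].
\end{equation*}
By multilinearity in the $j$-th slot, the $j$-th summand equals
$\Lambda(q_1',\dots,q_{j-1}',q_j-q_j',q_{j+1},\dots,q_N)$.

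Next I would apply the hypothesized bound $|\Lambda(p_1,\dots,p_N)|\leq C_0\prod_i\|p_i\|_{X_i}$ to each summand. In the $j$-th term, the first $j-1$ arguments are primed factors with norm at most $M_0$, the $j$-th argument is the difference $q_j-q_j'$, and the remaining $N-j$ arguments are unprimed factors with norm at most $M_0$. This yields
\begin{equation*}
\|\Lambda(q_1',\dots,q_{j-1}',q_j-q_j',q_{j+1},\dots,q_N)\|_Y \leq C_0\, M_0^{N-1}\, \|q_j-q_j'\|_{X_j}.
\end{equation*}
Summing over $j$ and applying the triangle inequality in $Y$ gives the desired estimate.

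There is no real obstacle here; the only thing to be careful about is the bookkeeping in the telescoping sum and that the factors of $M_0$ come out with exponent $N-1$ (each term has exactly $N-1$ slots that are not the difference). The argument uses only multilinearity and the given size bound, so no structure of $X_j$ or $Y$ beyond being normed spaces is required.
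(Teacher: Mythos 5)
Your proof is correct and follows essentially the same telescoping argument as the paper: both decompose the difference into a sum of $N$ terms, each with exactly one slot containing a difference $q_j - q_j'$ and the remaining $N-1$ slots bounded by $M_0$, then apply the multilinear bound and the triangle inequality. The only cosmetic difference is that the paper puts the unprimed entries before the difference slot and the primed entries after, whereas you reverse that order; both choices give valid telescoping identities.
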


\begin{proof} We write 
$$
 \Lambda ( q_1, \dots , q_N) - \Lambda ( q_1',\dots, q_N') 
= \sum _ { j = 1 } ^ N \Lambda ( q_ 1, \dots , q_{i-1} , q _ i - q
_i',q'_{i+1}, \dots, q_ N ' ) 
$$
and use the boundedness of  $ \Lambda $. 
\end{proof}

\note{ Need $ 1/2 > \epsilon >0$. (done)

Need to give estimate for differences.  } 

\begin{proposition}
\label{LPREG}
Let $\epsilon $ lie in the interval $(0, 1)$. 
If $ q , q'\in  H ^ { \epsilon, \epsilon } $ with  $ \| q \| _ {H^ {
    \epsilon , \epsilon } } \leq M _ 0 $ and $ \| q' \| _ { H ^ {
    \epsilon , \epsilon } } \leq M_0$, then $ ( \mu_1 -1 ) \in L^ {
\tilde   p } $ when   $1/ \tilde p$ in $ [0, \epsilon )\cap [ 0, 1/2) $ and 
$$
\| \mu _ 1 ( q ; \cdot, k ) - \mu _ 1 ( q' ;  \cdot, k ) \| _ { L^ {
    \tilde p}}\leq C (M _0, \tilde p ) \|q- q' \| _ { H ^ { \epsilon,
    \epsilon }}.
$$
\end{proposition}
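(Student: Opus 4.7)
I would bootstrap from the $L^{\tilde p_0}$ regularity supplied by Proposition \ref{MUEX}, using the decomposition
$$
\mu_1 - 1 = T_k(\mu_2), \qquad \mu_2 = T_k(1) + T_k(\mu_1 - 1),
$$
to first improve the integrability of $\mu_2$ and then apply $T_k$ once more to conclude. The only tools required are the pointwise bound $|T_k(f)(x)| \leq \frac{1}{2} I_1(|q||f|)(x)$, the embedding (\ref{embed}), the Hardy-Littlewood-Sobolev inequality (\ref{HLS}), and Vekua's endpoint estimate (\ref{vekua}), glued together by H\"older.

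For the piece $T_k(1) = \frac{1}{2}\cauchy(e_k q)$, the embedding (\ref{embed}) gives $q \in L^r$ for $1/r \in [1/2 - \epsilon/2,\,1/2 + \epsilon/2)$, so (\ref{HLS}) produces $T_k(1) \in L^{\tilde q}$ for $1/\tilde q \in (0,\epsilon/2)$ while (\ref{vekua}) delivers $T_k(1) \in L^\infty$. For the piece $T_k(\mu_1 - 1)$, I would combine $q \in L^r$ with $\mu_1 - 1 \in L^{\tilde p_0}$ via H\"older to put the product in $L^p$ with $1/p = 1/r + 1/\tilde p_0$; applying (\ref{HLS}) and (\ref{vekua}) to $\cauchy$ yields $T_k(\mu_1 - 1) \in L^{\tilde q}$ for $1/\tilde q \in [0,\,\epsilon/2 + 1/\tilde p_0) \cap [0,1/2)$. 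Adding, $\mu_2 \in L^s$ for $1/s$ in a wide interval that includes $0$.

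With $\mu_2$ controlled in a sufficiently large range of $L^s$ spaces, I would iterate once more: $\mu_1 - 1 = T_k(\mu_2) = \frac{1}{2}\cauchy(e_k q \bar\mu_2)$ and apply the same H\"older/(\ref{HLS})/(\ref{vekua}) scheme with $q \in L^r$ and $\mu_2 \in L^s$. This gives $\mu_1 - 1 \in L^{\tilde p}$ for $1/\tilde p = 1/r + 1/s - 1/2$, and by choosing $r$ near the right endpoint of its range and $s$ near the right endpoint of the interval established for $\mu_2$, the full target range $1/\tilde p \in [0,\epsilon)\cap[0,1/2)$ is covered.

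For the Lipschitz claim, each expression above is multilinear in $(q, \mu_1, \mu_2)$. I would write differences such as $T_{k,q}(\mu_2(q)) - T_{k,q'}(\mu_2(q'))$ as a telescoping sum in which only one argument changes at a time, and invoke Lemma \ref{mlllc} together with the $L^{\tilde p_0}$-difference estimate already furnished by Proposition \ref{MUEX}. The main obstacle is the bookkeeping of exponents: for every target $\tilde p$ one must exhibit intermediate H\"older/HLS exponents that all sit in their admissible windows, and in particular reach the endpoint $1/\tilde p = 0$ through (\ref{vekua}) rather than (\ref{HLS}); this is precisely where the hypothesis $1/\tilde p_0 < \epsilon/2$ from Proposition \ref{MUEX} is consumed, and it is the reason for the strict inequalities $1/\tilde p < \epsilon$ and $1/\tilde p < 1/2$ in the statement.
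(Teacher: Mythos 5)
Your proposal is correct and follows essentially the same route as the paper: both bootstrap from the $L^{\tilde p_0}$ bound of Proposition \ref{MUEX} via the integral equation $\mu_1-1 = T_k^2(\mu_1-1)+T_k^2(1)$ (your two-step pass through $\mu_2$ is just $T_k^2$ unwound into $T_k\circ T_k$), using (\ref{embed}), H\"older, (\ref{HLS}) for the gain in integrability and (\ref{vekua}) for the $L^\infty$ endpoint, and then obtain the Lipschitz bound by viewing $T_k^2(f)$ as multilinear in $(q,f)$ and invoking Lemma \ref{mlllc} together with the difference estimate of Proposition \ref{MUEX}. The exponent bookkeeping you describe matches the ranges recorded in (\ref{quant}).
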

\begin{proof}  Let $ \mu _ 1 = \mu _1 ( q;\cdot, \cdot)$. 
We have $ \mu _1 -1 \in L ^ { \tilde p_0 }$ with $ \tilde p _0 $ as in
Proposition \ref{MUEX}  and then (\ref{IE2}) implies that 
$ \mu _1 - 1 = T_k ^ 2 ( \mu _1 - 1 ) + T_k ^ 2 (1).
$ Since $ H ^ {\epsilon, \epsilon } \subset L^p$, for $ 1/p \in 
[ 1 /2 - \epsilon /2 , 1 /2 + \epsilon /2) $, it follows from the
  inequalities of H\"older and Hardy-Littlewood-Sobolev (\ref{HLS}) that $ T _ k ^ 2 ( \mu _1 - 1
  ) \in L^ { \tilde p } $ for $ 1/ \tilde p  \in ( 0 , \epsilon + 1 /
    \tilde p _ 0 )\cap ( 0, 1/2)  $,   
 $ T _ k ^ 2 ( 1) \in L ^ { \tilde p } $ for $
    1/ \tilde p \in ( 0, \epsilon ) $ and we have the estimates
\begin{equation}
\label{quant}
\| T _ k ^ 2 (\mu _1 - 1) \| _ { L^ { \tilde p }} \leq C \| q \|^2 _ { H
  ^ { \epsilon , \epsilon } }  \| \mu _ 1 - 1 \| _ { L^ { \tilde p _
    0}}, \qquad
\| T _k ^ 2 (1) \| _{ L^ { \tilde p }} \leq C \|q\|^2_ {H^ { \epsilon,
    \epsilon }}. 
\end{equation}
Thus $ \mu _ 1 - 1 \in L ^ {
        \tilde p } $ for $ 1/ \tilde p \in ( 0, \epsilon )$.   Similar
      considerations and the estimate (\ref{vekua}) give that $ \mu
      _1$  is bounded.  

Once we recognize that $ T_k^2 (f)$ is a multi-linear expression in $q$
and $f$, the estimate for the differences follows from (\ref{quant}),
Proposition \ref{MUEX},  and Lemma \ref{mlllc}.  
\end{proof}


\section{Brascamp-Lieb Forms} \label{bl}

We consider a family of Brascamp-Lieb forms. A criterion for the
finiteness of these forms on families of $L^p$-spaces was given by
Barthe \cite[Proposition 3]{FB:1998} and simpler proofs of his criterion were
given by Carlen, Lieb, and Loss \cite[Theorem 4.2]{MR2077162}  and
Bennett, Carbery, Christ, and Tao \cite[Remark 2.1]{BCCT:2010}. We
note that Barthe and  Carlen, Lieb, and Loss also  give information about 
 the best constant in these inequalities. We are not able
to make use of this information in our work. A simple approach to
the finiteness of these forms  can be found in the dissertation
of Z. Nie, see \cite{MR2754810} (and may be  well-known). 

To describe the forms we will consider, fix $N$ and  let $ E \subset \reals ^ {
  N +1}$ be a finite collection of non-zero vectors. The set $E$
carries the structure of a matroid (see \cite{MR2036957} for the
definition). Using $E$, we
will define a closed convex subset of $ \reals ^ { N+1}$ which is
called the matroid polytope for $E$. 
 We denote elements of $  [ 0, 1] ^ E$ as
functions $ \theta : E \rightarrow [ 0, 1] $. If $ A \subset E$ is a
set, then we let $ \chi _A$ be the indicator function of the set
$A$. Thus 
$$
\chi _A ( v) = \left \{ \begin{array}{ll} 1, \qquad & v \in A \\ 0,
  \qquad & v \notin A . 
\end{array}\right. 
$$
The { \em matroid polytope }of $E$, $ { \cal P } (E) $, is defined to
be 
the convex hull of the set $ \{ \chi _B : B \subset E  \mbox { and $B$
  is a basis for $ \reals ^ { N + 1}$} \}$. Given a set of vectors $E$,
  we define a multi-linear form 
$$
\Lambda ( f _ v | v \in E ) = \int _ { \complexes ^ { N +1}} \prod _ {
  v \in E } f _ v ( v\cdot x ) \, d x
$$
where  $ x = ( x_0, x_ 1, \dots , x _N )$ is a point  in  $
\complexes ^ { N +1 }$ and $ v\cdot x = \sum _ { i = 0 } ^ N v_ i x_i$
is the standard bilinear inner product.   We initially assume that the
functions $ f _ v$ are non-negative so that the integral defining $
\Lambda ( f _ v| v \in E )$ will exist, though it may be infinite.
The estimates of Theorem \ref{forms} will give us estimates for the
form when the functions $\{ f_v : v \in E\}$ belong to certain Lorentz spaces. 
These estimates will allow us to extend the form to 
appropriate products of Lorentz spaces.

\note{range of p and r in $L^ { p, r}$, probably want $1<p< \infty$
  and  $ 1 \leq r \leq \infty$. }

We will need to consider these forms when the functions $ f _v$ are in
Lorentz spaces $ L ^ { p, r } ( \complexes )$, $1\leq p,r \leq
\infty$, 
and refer the reader to
the monograph of Bergh and L\"ofstr\"om for the definition
of these spaces\cite[p.~8]{BL:1976}. Our main estimate for
Brascamp-Lieb forms is the following theorem.
\begin{theorem}\label{forms} 
Suppose that the function  $( 1/ p_v ) _ { v \in E} $ lies in the
interior of the set $ { \cal P } ( E_1)$ and $ \sum _ {v \in E } 1/ r _
v \geq 1 $. Then there is a finite constant $ C $ such that 
$$
\Lambda ( f _ v | v \in E ) \leq C \prod _ { v \in E } \|f _ v \| _ {
  L ^ { p_ v , r _ v} }.
$$
\end{theorem}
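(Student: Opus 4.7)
The plan is to prove Theorem \ref{forms} by reducing to the vertex case of the matroid polytope ${\cal P}(E)$ and then applying multilinear real interpolation. Since the vertices of ${\cal P}(E)$ are by definition the indicator functions $\chi_B$ of bases $B\subset E$ of $\reals^{N+1}$, Carath\'eodory's theorem lets me write any interior point $(1/p_v)_{v\in E}$ as a finite convex combination $\sum_j \lambda_j \chi_{B_j}$ with each $\lambda_j>0$. Once the vertex bounds are in place, interpolation between them produces the Lorentz-space estimate at interior points.

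For the vertex estimate at $\chi_B$, the linear change of variables $y_v = v\cdot x$, $v\in B$, has nonzero Jacobian because $B$ is a basis of $\reals^{N+1}$. After this change of variables the coordinates $y_v$ for $v\in B$ are independent, and for $v\notin B$ the value $v\cdot x$ becomes a fixed linear combination $\tilde v\cdot y$ of the $y_w$, $w\in B$. Bounding each $f_v$ with $v\notin B$ by its $L^\infty$-norm and then applying Fubini to the remaining product gives
$$
\Lambda(f_v \mid v\in E)\;\leq\; C_B \prod_{v\in B}\|f_v\|_{L^1}\,\prod_{v\notin B}\|f_v\|_{L^\infty},
$$
which is the Loomis--Whitney inequality associated to the basis $B$.

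With the vertex bounds in hand I would invoke multilinear real interpolation (as formulated in Bergh--L\"ofstr\"om and used in this context by Bennett--Carbery--Christ--Tao and Nie) between the finitely many vertex estimates indexed by the bases $B_j$ in the Carath\'eodory decomposition. At each vertex the exponent attached to $f_v$ is either $1$ (if $v\in B_j$) or $\infty$ (otherwise); real interpolation with weights $\lambda_j$ reproduces the Lorentz space $L^{p_v,r_v}$ for any first index given by the convex combination and any second index $r_v$ compatible with the Marcinkiewicz condition $\sum_{v\in E} 1/r_v \geq 1$. Strict interiority of $(1/p_v)$ in ${\cal P}(E)$ is exactly what guarantees that the representing bases $B_j$ can be chosen so that each coordinate is a nontrivial combination of $1$ and $\infty$ values, which is what makes the real method applicable.

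The main obstacle will be this last interpolation step. The convex-geometric reduction and the vertex Loomis--Whitney bound are both routine, but one has to verify carefully that the multilinear real method delivers a true $L^{p_v,r_v}$ bound (not merely a restricted weak-type bound) under the stated hypotheses, and that the constant depends only on $E$ and the exponents. I would handle this by following the treatment of Bennett--Carbery--Christ--Tao or Nie's thesis, which the authors cite for precisely this purpose, rather than reproducing the interpolation argument from scratch.
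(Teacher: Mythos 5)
Your proposal is correct, and it takes a recognizably different route from the one the paper actually writes down. The paper's proof is essentially a citation chain: reduce from $\complexes^{N+1}$ to $\reals^{N+1}$ by Fubini, invoke Barthe's theorem to get the $L^{p}$ bounds on all of ${\cal P}(E)$ (noting that Barthe's system of inequalities describes exactly the matroid polytope), and then use multilinear real interpolation (Christ, Janson) only to upgrade from $L^{p}$ at interior points to the Lorentz refinement with $\sum_v 1/r_v \geq 1$. You instead prove the endpoint information yourself: at a vertex $\chi_B$ the bound $\Lambda(f_v\mid v\in E)\leq C_B\prod_{v\in B}\|f_v\|_{L^1}\prod_{v\notin B}\|f_v\|_{L^\infty}$ follows from the invertible linear change of variables $y_v=v\cdot x$, $v\in B$ (which works directly over $\complexes^{N+1}$, so the Fubini reduction to $\reals^{N+1}$ is not even needed), and you then interpolate from the finitely many vertices rather than from Barthe's full polytope result. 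This is precisely the ``simple approach'' the authors attribute to Nie's thesis and to Remark 2.1 of Bennett--Carbery--Christ--Tao, so it is a legitimate and in fact more self-contained argument: it replaces the deep input (Barthe) with an elementary Loomis--Whitney computation, at the cost of leaning entirely on the multilinear real method to travel from the $(1,\infty)$-type vertices to the interior Lorentz estimates. Two small points to keep in mind: ``interior'' must be read as the relative interior, since ${\cal P}(E)$ lies in the affine hyperplane $\sum_{v}\theta(v)=N+1$ and has empty interior in $[0,1]^E$; and the role of interiority is exactly as you describe --- it guarantees that for each $v$ the representing bases include some containing $v$ and some not, which is what lets Janson's theorem vary each slot and deliver a genuine $L^{p_v,r_v}$ bound under the condition $\sum_v 1/r_v\geq 1$.
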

\note{ We probably need to work through the proofs again to check that
  we don't need to exclude cases such as one vector is a multiple of
  another. } 

\begin{proof} As noted above, this is essentially a result of
  Barthe. To connect our statement to the result of Barthe, we first
  observe that the finiteness of the form on $ \complexes ^ { N +1}$
  is equivalent to the finiteness of the form on $ \reals ^ { N +1 }$,
  {\em i.e. }the form obtained when $ x \in \reals ^ { N +1}$ instead
  of $ \complexes ^ { N + 1}$. This follows from 
the theorem of  Fubini. 

Next we observe that Barthe gives a description of the
family of $ L^ p$-spaces for which the form is finite in terms of a
family of inequalities satisfied by the reciprocals, $ ( 1/p_v ) _ { v
  \in E }$. It is known that the inequalities of Barthe describe the
matroid polytope, see the excellent monograph of J.~Lee
\cite{MR2036957}, for example. Finally, a multi-linear version of the
real method of interpolation allows us  to pass from $L ^ p $ estimates in
$ { \cal P }( E) $ to Lorentz space estimates in the interior of $ {
  \cal P } (E)$.  
See work of Christ \cite{MR766216} or  Janson
\cite{SJ:1986}  for the 
multi-linear interpolation results.  
\end{proof}

\note{ We will use $ \dot L^ 2 _ \alpha$ to denote the  homogeneous
  weighted space. We also use $ x^*$ for the conjugate,
  occasionally. } 

We will need to consider two sets of vectors in this section. The
first we will denote by $ E _ 1 \subset \reals ^ { N +1}$ for $ N
\geq 2$ and is given by 
\begin{equation} \label{E1def}
E _ 1 = E_1 ^ {N+1}  = \{ e _ 0, e_1, \dots, e_N, e_0-e_1, \dots , e _ { N -1} - e _N ,
\zeta = \sum _ { j = 0 } ^ N ( -1) ^ j e _ j  \} . 
\end{equation}
Note that the condition $ N \geq 2$ guarantees that $E_1$ contains $ 2N
+2$ distinct vectors.  The second set of vectors will only be needed
in odd dimensions. We define  
$ E_2 \subset \reals  ^ { 2N +1 }$, for $N \geq 1$ by 
\begin{multline}
\label{E2def}
 E _ 2= E _ 2 ^ { 2N+1} = \\
\{ e _ 0 , e_1 , \dots e _ { 2N } ,  \sum _ { j = 0 } ^ { 2k-
  1} ( - 1 ) ^ j e _ j, \sum _ { j = 0  }  ^ { 2k -1} (-1) ^ j  e_ {
  2N - j } ,   k = 1, \dots , N, \zeta = \sum _ { j = 0 } ^ { 2N } (
-1 ) ^ j e _ j \} .
\end{multline}

The following lemma  shows that the constant  function $ \theta (v) =
1/2$  lies in the interior of the  matroid polytopes for $ E_1$ and $
E_2$. 
\begin{lemma} \label{geom} For $j =1 $ or $2$,  let $ A _ { N+1 }= \{ \theta : \sum _{ v
    \in E _ j } |\theta (v) - 1/2 | \leq 1 , \ \sum   _ { v\in E _j }
  \theta (v) = N + 1 \} $. 
Then $ A _ { N + 1 } \subset { \cal P } ( E _1 ) $ for  $N = 2, 3,
\dots$ and $ A_ { 2N + 1 } \subset  { \cal P} (E _2)  $ for $ N = 1,2, \dots$. 
\end{lemma}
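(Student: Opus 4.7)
My plan is to apply Edmonds' inequality description of the matroid polytope: $\theta \in \mathcal{P}(E_j)$ if and only if $\theta \in [0,1]^{E_j}$, $\sum_{v \in E_j}\theta(v) = r(E_j)$, and $\sum_{v \in A}\theta(v) \leq r(A)$ for every $A \subseteq E_j$, where $r$ is the rank function of the linear matroid on $E_j$. A direct count from (\ref{E1def}) and (\ref{E2def}) gives $|E_j| = 2 r(E_j)$ in both cases, so the normalization built into $A_{N+1}$ coincides with $r(E_j)$, and the task reduces to (i) showing $A_{N+1} \subset [0,1]^{E_j}$ and (ii) verifying the rank inequalities.

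For (i), if $\theta(v_0) > 1$ for some $\theta \in A_{N+1}$, set $\epsilon = \theta(v_0) - 1 > 0$: then $|\theta(v_0) - 1/2| = 1/2 + \epsilon$, so $\sum_{v \ne v_0}|\theta(v) - 1/2| \leq 1/2 - \epsilon$, yet the sum-zero condition forces $\sum_{v \ne v_0}(\theta(v) - 1/2) = -1/2 - \epsilon$, which violates the triangle inequality; the mirror argument rules out $\theta(v_0) < 0$. For (ii), split $E_j$ into $P = \{v : \theta(v) \geq 1/2\}$ and its complement, and set $S_+ = \sum_{v \in P}(\theta(v) - 1/2)$; the $L^1$ and sum-zero conditions give $S_+ \leq 1/2$. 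For any proper $A \subsetneq E_j$, discarding negative contributions,
\[
\sum_{v \in A}\theta(v) \;=\; \frac{|A|}{2} + \sum_{v \in A}(\theta(v) - 1/2) \;\leq\; \frac{|A|}{2} + S_+ \;\leq\; \frac{|A|+1}{2},
\]
while for $A = E_j$ the inequality $\sum_v \theta(v) \leq r(E_j)$ is already an equality by normalization. The lemma therefore reduces to the combinatorial bound $r(A) \geq (|A|+1)/2$ for every proper subset $A \subsetneq E_j$.

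The main obstacle is this rank bound, a pure statement about the linear matroids on $E_1$ and $E_2$. I would attack it by classifying the circuits (minimal dependent sets): the obvious length-$3$ triples $\{e_i, e_{i+1}, e_i - e_{i+1}\}$, their analogues for the partial alternating sums in $E_2$, and a longer circuit obtained by telescoping $\zeta$ against consecutive differences. Since every such circuit $C$ has $|C| \geq 3$ and defect $|C| - r(C) = 1 \leq (|C|-1)/2$, each single circuit is safe with room to spare; the delicate point is controlling the defect of a union of overlapping circuits. I would handle this by induction on $|A|$, using the matroid exchange property to peel off, at each step, a vector $v \in A$ lying in the closure of $A \setminus \{v\}$ and arguing that the chain structure of the differences $e_j - e_{j+1}$ together with the uniqueness of the long $\zeta$-circuit prevents the remaining set from accumulating too many overlapping dependencies. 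An alternative route would be to identify the vertices of $A_{N+1}$ explicitly---a short geometric argument shows they are the ``one-$1$, one-$0$, rest-$1/2$'' points---and display each as a symmetric convex combination of basis indicators using the reflection symmetry $e_j \leftrightarrow e_{N-j}$, but the underlying circuit bookkeeping would remain the decisive step.
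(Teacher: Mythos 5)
Your reduction to Edmonds' inequalities is correct and cleanly carried out: the box bound $A_{N+1}\subset[0,1]^{E_j}$ is right, the normalization $\sum_v\theta(v)=r(E_j)$ matches because $|E_j|=2r(E_j)$, and the estimate $\sum_{v\in A}\theta(v)\leq(|A|+1)/2$ for proper $A\subsetneq E_j$ (which is tight, since a vertex $\Phi_{v_0,w_0}$ with $v_0\in A$, $w_0\notin A$ achieves it) correctly converts the lemma into the combinatorial statement $r(A)\geq(|A|+1)/2$ for all proper $A$. But this is where the proposal stops being a proof. The rank bound is the entire content of the lemma in your formulation, and you offer only a plan of attack, not an argument: the circuit classification, the control of ``defect of a union of overlapping circuits,'' and the proposed induction via the exchange property are all left as intentions. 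For $E_1$ and especially $E_2$ the circuits overlap substantially, and the induction step you sketch (peel off $v$ in the closure of $A\setminus\{v\}$) does not by itself bound how fast nullity can grow. As written there is therefore a genuine gap precisely at the decisive step.

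The paper avoids the rank inequalities altogether. It observes (as you do in your alternative route) that $A_{N+1}$ is the convex hull of the points $\Phi_{v,w}$ taking value $1$ at $v$, $0$ at $w$, and $1/2$ elsewhere, and then for each ordered pair $(v,w)$ of distinct elements of $E_j$ it \emph{constructs} two bases $B_1,B_2$ of $\mathbb{R}^{N+1}$ (resp.\ $\mathbb{R}^{2N+1}$) with $B_1\cap B_2=\{v\}$ and $E_j\setminus(B_1\cup B_2)=\{w\}$, so that $\Phi_{v,w}=\tfrac12(\chi_{B_1}+\chi_{B_2})\in\mathcal{P}(E_j)$ by convexity. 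This is a finite case analysis on the type of $v$ and $w$ (coordinate vector, difference, or $\zeta$), exploiting the flip $e_k\mapsto e_{N-k}$ to cut down cases. It is more elementary and, crucially, it requires no ``circuit bookkeeping'' at all --- your closing remark that the circuit count ``would remain the decisive step'' under this route is not accurate. If you want to keep your Edmonds-style argument you must actually prove $r(A)\geq(|A|+1)/2$; otherwise the explicit two-basis construction is the shorter path to a complete proof.
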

See Appendix \ref{matroids} for the proof. 

Our main goal for this section is to establish estimates for the
following two forms, 
$$
\Lambda_1 (q_0, \dots, q_N, t ) = \int _{\complexes ^ { N+1}} 
\frac { q_0 ( x_0 ) \dots q_N ( x_N) t( \zeta \cdot x) }
{ |x_ 0 - x_1 | \dots |x _ { N -1 } - x_N | } \, dx
$$
and
\begin{multline*}
\Lambda _ 2( q_0, \dots, q _ { 2N } , t )  = \\
 \int _ { \complexes ^ { 2N +1} } \frac { q_0 (x_0 ) \dots q _ { 2N } 
  ( x_ { 2N}) t ( \zeta \cdot x) } { |x_ 0-x_1|\dots |x_ 0 - x _
  1\dots - x _ { 2N -1 } | |x_ { 2N } - x _ { 2N-1} | \cdots | x _ {
    2N} -x_ { 2N -1} \cdots - x _1 | } \,d x.
\end{multline*}
The next lemma gives several estimates for these forms that are the
main step in obtaining estimates for the terms $ r_j$ in the expansion
(\ref{rexp}) of the scattering map. 

\begin{lemma} Suppose $ 0 \leq \alpha < 1$.  The following estimates for the forms $ \Lambda
  _1$ and $ \Lambda _2$ hold:
\begin{equation}\label{FormA}
\Lambda _ 1 ( q_ 0, \dots, q_{ 2N } , t ) 
\leq C \| t \| _ { H^ { 0, -\alpha} } \prod _ { j =0 } ^ { 2N} \| q _
j \| _ { H ^ { 0, \alpha / ( N+1)}},  \qquad N \geq 1
\end{equation}
\begin{equation}
\label{FormB}
\Lambda _ 1 ( q_ 0, \dots, q_{ N } , t ) 
\leq C \| t \| _ { L^2 } \prod _ { j =0 } ^ { N} \| q _
j \| _ { L^2}, \qquad N \geq 2
\end{equation}
\begin{equation}
\label{FormC}
\Lambda _ 2 ( q _0,  \dots, q_ { 2N} , t ) 
\leq C\| t \|_{ H ^ { 0, -\alpha} } \prod _ { j = 0 } ^ { 2N}  \| q _ j
  \| _ { H ^ { 0, \alpha / ( N +1) }}, \qquad N \geq 1.
\end{equation}
\end{lemma}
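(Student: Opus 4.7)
Both $\Lambda_1$ and $\Lambda_2$ are Brascamp--Lieb forms whose vector sets $E_1, E_2$ have matroid polytopes containing the $\ell^1$-ball of radius $1$ around the center $\theta\equiv 1/2$ (Lemma \ref{geom}), so all three bounds will come from Theorem \ref{forms} once appropriate Lorentz exponents are identified and, for the weighted estimates, the weight on $t$ is transferred via duality.

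For the unweighted estimate (\ref{FormB}), I apply Theorem \ref{forms} to $E_1$ with $(1/p_v)\equiv 1/2$: each $q_j,t\in L^{2,2}=L^2$ and each Cauchy factor $|y|^{-1}\in L^{2,\infty}(\reals^2)$ (an elementary $\reals^2$ estimate). The Lorentz--Hölder condition $\sum_v 1/r_v=(N+2)/2\geq 1$ is automatic, and Lemma \ref{geom} places $\theta\equiv 1/2\in A_{N+1}\subset\mathcal{P}(E_1)$ in the interior. The hypothesis $N\geq 2$ is needed only so that $E_1$ consists of $2N+2$ distinct vectors.

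For the weighted estimate (\ref{FormA}) I dualize in the $t$-slot: the bound is equivalent to
\[
\|K\|_{L^2_\alpha}\leq C\prod_{j=0}^{N}\|q_j\|_{L^2_{\alpha/(N+1)}},\qquad K(u)=\int F(x)\,\delta(\zeta\cdot x-u)\,dx,
\]
with $F(x)=\prod q_j(x_j)\prod|x_{j-1}-x_j|^{-1}$. Squaring the weighted norm and using the $\delta$ gives a doubled Brascamp--Lieb form on $\complexes^{2(N+1)}$, restricted to the codimension-one hyperplane $\zeta\cdot(x-x')=0$, and carrying an explicit weight $\langle\zeta\cdot x\rangle^{2\alpha}$ on the $\zeta$-direction. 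I factor $q_j=\langle\cdot\rangle^{-\alpha/(N+1)}\tilde q_j$ with $\tilde q_j\in L^2$; since $\langle\cdot\rangle^{-\alpha/(N+1)}\in L^{2(N+1)/\alpha,\infty}(\reals^2)$, Hölder for Lorentz spaces places $q_j\in L^{p_j,2}$ with $1/p_j=1/2+\alpha/(2(N+1))$. The weight on the $\zeta$-direction shifts its exponent to $1/p_\zeta=1/2-\alpha/2$. A direct computation gives $\sum_v 1/p_v=N+1$ (the affine span of $\mathcal{P}(E_1)$) and $\sum_v|1/p_v-1/2|=\alpha$, so by Lemma \ref{geom} the exponent vector lies strictly inside $A_{N+1}\subset\mathcal{P}(E_1)$ for $\alpha<1$; the Lorentz condition is preserved, and Theorem \ref{forms} yields the estimate. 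Estimate (\ref{FormC}) follows by the identical argument with $E_2\subset\reals^{2N+1}$ replacing $E_1$ and the odd-dimensional half of Lemma \ref{geom} providing the interior condition.

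The principal obstacle is that $\langle\cdot\rangle^\alpha\notin L^p(\reals^2)$ for any $p$, so the factor $t\in H^{0,-\alpha}$ cannot be placed directly in a Lorentz space and Theorem \ref{forms} does not apply to $\Lambda_1(q_0,\ldots,q_N,t)$ in its original form. The duality step transfers the weight from $t$ onto the kernel $K$, where it appears as a standard multiplicative weight in a doubled Brascamp--Lieb form; the precise quantitative content of Lemma \ref{geom}---that $A_{N+1}$ is the $\ell^1$-ball of radius $1$ around the symmetric point---is exactly what admits the perturbation $\alpha<1$ while remaining in the matroid polytope.
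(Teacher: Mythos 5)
Your treatment of (\ref{FormB}) is correct and is exactly the paper's: all exponents equal to $1/2$, the Cauchy factors in $L^{2,\infty}$, and Lemma \ref{geom} placing $\theta\equiv 1/2$ in the interior of ${\cal P}(E_1)$.

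For the weighted estimates (\ref{FormA}) and (\ref{FormC}) there is a genuine gap. You correctly identify the obstacle --- $\langle\cdot\rangle^{\alpha}\notin L^{p,r}$ for any exponents, so the $t$-slot cannot be fed to Theorem \ref{forms} --- but the proposed cure does not remove it. After dualizing and squaring, the doubled integrand still carries the factor $\langle \zeta\cdot x\rangle^{2\alpha}$ (together with a delta function restricting to a hyperplane), and this growing weight is a function of the linear functional $\zeta\cdot x$ that lies in no Lorentz space; Theorem \ref{forms} is stated only for forms $\int\prod_v f_v(v\cdot x)\,dx$ with each $f_v\in L^{p_v,r_v}$, so it does not apply to the doubled object any more than to the original one. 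Assigning the weight a fictitious exponent ``$1/p_\zeta=1/2-\alpha/2$'' is not justified by anything proved in the paper. The bookkeeping also does not close as written: the form in (\ref{FormA}) lives on $\complexes^{2N+1}$ with $2N+1$ functions $q_j$, so the affine constraint is $\sum_v 1/p_v=2N+1$, and if you raise all $2N+1$ of the $q$-exponents by $\alpha/(2(N+1))$ the deviation $\sum_v|1/p_v-1/2|$ exceeds $1$ for $\alpha$ near $1$, so Lemma \ref{geom} would only cover $\alpha$ up to roughly $1/2$.

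The two missing ideas are the ones the paper uses. First, a pointwise domination of the weight along the fiber: since $\zeta\cdot x=x_0+\sum_{j=1}^{N}(x_{2j}-x_{2j-1})$ and $s\mapsto s^{\alpha}$ is subadditive,
\begin{equation*}
|\zeta\cdot x|^{\alpha}\ \le\ |x_0|^{\alpha}+\sum_{j=1}^{N}|x_{2j-1}-x_{2j}|^{\alpha},
\end{equation*}
which splits the weighted form into $N+1$ Brascamp--Lieb forms in each of which the weight sits on a \emph{single} slot, where it can legitimately be absorbed: $|x|^{\alpha-1}\in L^{2/(1-\alpha),\infty}$ for a Cauchy factor (exponent $1/2-\alpha/2$) and $\| |\cdot|^{\alpha}q_0\|_{L^2}$ promoted to $L^{2/(1+\alpha),2}$ by Lorentz--H\"older (exponent $1/2+\alpha/2$); each perturbation is balanced, keeps $\sum_v 1/p_v$ fixed, and has $\ell^1$-deviation exactly $\alpha<1$, so Lemma \ref{geom} applies for the full range. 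This yields, for each even index $2k$, an estimate with the whole weight $\|q_{2k}\|_{\dot L^2_{\alpha}}$ on one factor. Second, multilinear complex interpolation among these $N+1$ estimates (and with the $\alpha=0$ case) is what distributes the weight evenly as $\alpha/(N+1)$ and converts homogeneous to inhomogeneous weights; the exponent $\alpha/(N+1)$ in (\ref{FormA}) cannot be reached by a single application of Theorem \ref{forms}. The estimate (\ref{FormC}) then needs the analogous pointwise inequality adapted to the denominators of $\Lambda_2$, namely $|\zeta\cdot x|^{\alpha}\le |x_0-x_1+\dots-x_{2k-1}|^{\alpha}+|x_{2k}|^{\alpha}+|x_{2k+1}-\dots-x_{2N}|^{\alpha}$, not merely the substitution of $E_2$ for $E_1$.
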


\begin{proof}
We begin by proving the estimate (\ref{FormA}) for the form $ \Lambda
_1$. We start with the elementary observation that for $ 0 \leq \alpha
\leq 1 $, we have 
$$
1 \leq \frac { |x_0 |^ \alpha + \sum _ { j = 1 } ^ N |x _ { 2j -1 } -
  x _ { 2j } |^ \alpha } { |x_0-x_1+ \dots +x_{2N} |^ \alpha } . 
$$
To establish a relation between $ \Lambda _1 $ and $ \Lambda( f_v |
v\in E_1) 
$, we define $ f_v ^ k  $ for $k  =0, \dots, N$ and $ v\in E_1$
by
\begin{align*} 
 &   f_ { e_0} ^ 0 = |\dotarg|^ \alpha |q_0| \\
&  f _ { e_j} ^ k   = | q_j|, \qquad & (j,k) \neq (0,0) \\
 &  f ^ k _ { e_ { 2k -1} - e_ { 2k } } =
1/ |\dotarg| ^ {     1-\alpha }, \qquad & k =1, \dots, N \\
&  f ^ k _ { e _ { j -1} - e _ j }  = 1 / |\dotarg|, \qquad & j \neq 2k \\
& f _ \zeta ^ k  = t / |\dotarg| ^ \alpha , \qquad &  k = 0,   \dots,
N.
\end{align*}
 With these definitions we have 
\begin{equation}\label{simple}
|\Lambda _1 ( q_ 0, \dots, q_ { 2N } , t) | \leq 
 \sum _ { k =0 } ^ N \Lambda ( f _v ^ k | v \in E _1) . 
\end{equation}
Apply Theorem \ref{forms} to the first term in the sum on the right-hand side of
(\ref{simple}) and observe that Lemma \ref{geom} tells us that the 
constant function $ \theta = 1/2$ is in the interior of $ { \cal P }
(E_1)$. Now we are able to conclude that 
$$
\Lambda ( f_ v ^ 0 | v\in E _1 ) \leq C\| t \| _ { \dot L ^ 2
    _ {-\alpha} }  \| q _ 0 \| _ { \dot L^2 _ \alpha }
  \prod _ { j =1 } ^ { 2N } \| q _ j \| _ { L ^ 2 } 
. 
$$
Here we have used that $ |\cdot| ^ { -1} $  is in the Lorentz space $
L^ { 2, \infty}$. For the terms $ \Lambda (f_v ^ k |v\in E_1)$, $k\geq
1$, we 
let $ 1/p _ { e _ 0 }= ( 1+ \alpha ) /2$, $ 1/ p _ { e _ { 2k -1} - e
  _ { 2k}} = ( 1- \alpha ) /2$ and use Theorem \ref{forms} to obtain
that 
\begin{multline}
\label{simpler}
\Lambda ( f _v ^ k | v \in E _1 )  \\ \leq C \| t \| _ { \dot L ^ 2 _ {-\alpha
}}
\| q _ 0 \| _ { L ^ { 2/ ( 1+ \alpha ) , 2 }} 
\left ( \prod _ { j =1 } ^ { 2N } \| q _ j \| _ 2 \right ) \| |\cdot |^ {
  \alpha -1 } \| _ { L ^ { 2 / ( 1- \alpha ) , \infty }} \cdot \|
|\cdot |^ { -1} \|^ { 2N -1} _ { L ^ {2, \infty } } .
\end{multline}
The generalization of  H\"older's inequality to Lorentz spaces (which
may be proven by   multilinear interpolation) implies that 
\begin{equation}  \label{holder}
\| q _ 0 \| _ { L ^ { 2 / ( 1+ \alpha ) ,  2 } } \leq C \| q _ 0 \| _
   {\dot L ^ 2 _ \alpha } \| | \cdot | ^ { -\alpha } \| _ {   L ^ { 2/
       \alpha , \infty } }  . 
\end{equation}
Since $ \| |\dotarg | ^ { -\alpha } \| _ { L ^ { 2/ \alpha , \infty }}\leq
C _\alpha$, from  (\ref{simpler})  and (\ref{holder}),
we may conclude that  
$$
\Lambda ( f _ v ^ k | v \in E _1 ) \leq C \| t \| _ {\dot  L ^ 2 _ {
    -\alpha } } \| q _ 0 \| _ { \dot L ^ 2 _ \alpha } \prod _ { j = 1} ^
        {2N}\| q _ j \| _ { L ^ 2} .
$$

A similar argument replacing $ 0$ by another even index  $2k$ implies
that
we have the estimates 
$$
\Lambda _ 1 ( q _ 0, \dots, q _ { 2N }, t )
\leq C \| t \| _ { \dot L ^ 2 _ { -\alpha } } \| q _ { 2k } \| _ {\dot L ^ 2 _
  \alpha } \prod _ {j \neq 2k } \| q _ j \| _ { L ^ 2 } , \qquad k =
1, \dots, N.
$$
Combining the cases $ \alpha =0$ and $ \alpha >0$ and  multi-linear
interpolation by the complex method gives the estimate 
(\ref{FormA}). 
The estimate (\ref{FormB}) follows  directly from  Theorem \ref{forms}
and Lemma \ref{geom}. 

Finally,  the estimate (\ref{FormC}) can be obtained by beginning with
the elementary estimate
$$
1 \leq \frac { |x_0 - x _1 + \dots - x _ { 2k -1} |^ \alpha + |x_ { 2k
  } |^ \alpha + | x_ { 2k+1} - x _ { 2k+2} \dots - x _ { 2N } | ^
  \alpha } { | x_ 0- x_1 + \dots + x _ { 2N } |^ \alpha } 
$$
and arguing as in the case of (\ref{FormA}). 
\end{proof}

We now turn to the estimates for the multi-linear expressions $ r_j$
defined in (\ref{multdef}). Using the estimates (\ref{FormA}) and
(\ref{FormC}) we prove the following result.

\begin{proposition} \label{term} If $ \alpha \in [0, 1) $ and $ r_j $
    is as defined in (\ref{multdef}), then 
\begin{align} \label{smooth} 
\| r _ j \| _ { H ^ { \alpha, 0 } } & \leq C_j \| q \| ^ { 2j +1 } _ { H ^
  { 0, \alpha / ( j+1)}},  
\\
\label{decay}
\| r _ j \| _ { H ^ {0,  \alpha } } & \leq C_j \| q \| ^ { 2j +1 } _ { H ^
  {\alpha / ( j+1), 0} }.
\end{align}
\end{proposition}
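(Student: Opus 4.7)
The plan is to expand $r_j$ as an explicit multilinear expression in $q$, and then bound it by duality using the Brascamp-Lieb estimates of the previous lemma.

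First I would iterate the definition of $T_k$, yielding $T_k^{2j}(1)(y_0)$ as a $2j$-fold iterated integral in auxiliary variables $y_1,\ldots,y_{2j}$. Substituting this into (\ref{multdef}) and using the identities $e_k(y)e_k(y')=e_k(y+y')$ and $\overline{e_k(y)}=e_k(-y)$ to collect the exponentials produces
\begin{equation*}
r_j(k) = c_j \int_{\complexes^{2j+1}} \Bigl(\prod_{i=0}^{2j} q^{\pm}(y_i)\Bigr) \frac{e_k(\zeta\cdot y)}{\prod_{i=1}^{2j} d_i(y_{i-1},y_i)}\,dy,
\end{equation*}
where $\zeta = \sum_{i=0}^{2j}(-1)^i e_i \in \reals^{2j+1}$, the $q^{\pm}$ alternates between $q$ and $\bar q$ according to the parity of $i$, and each $d_i$ is either $y_{i-1}-y_i$ or $\bar y_{i-1}-\bar y_i$. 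The chain of Cauchy factors, the coordinate positions $e_i$ of the $q$'s, and the distinguished vector $\zeta$ together form precisely the geometric data of the set $E_1^{2j+1}$ from (\ref{E1def}).

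For (\ref{smooth}) I would argue by duality, $\|r_j\|_{H^{\alpha,0}} = \sup\{|\langle r_j,t\rangle|:\|t\|_{H^{-\alpha,0}}\leq 1\}$. Integrating in $k$ first, the identity $\int e_k(w)\overline{t(k)}\,dk = \pi\,\overline{\hat t(-w)}$ evaluates the $k$-integral, and after taking absolute values $|\langle r_j,t\rangle|$ is bounded by a constant multiple of $\Lambda_1(|q|,\ldots,|q|,|\hat t|)$. Applying (\ref{FormA}) with $N=j$, together with the Fourier exchange $\|\hat t\|_{H^{0,-\alpha}} = \|t\|_{H^{-\alpha,0}}$, produces (\ref{smooth}).

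For (\ref{decay}), the same argument applied to the $y$-representation converts $t$ to $\hat t$ and therefore yields $\|t\|_{H^{-\alpha,0}}$ in place of the desired $\|t\|_{H^{0,-\alpha}}$. To remedy this I would use the Fourier-type identities (\ref{C1}) and (\ref{C2}) iteratively, applying one of them to each of the internal $y_i$-integrals (alternating according to whether the adjacent Cauchy factor involves $d_i$ or $\bar d_i$). This converts every $q$ into $\hat q$ and replaces the chain in $y$ by a Cauchy chain in the frequency variables $(k,\ell_1,\ldots,\ell_{2j})$; the remaining $y_{2j}$-integral collapses via Fourier to $\hat q^{\pm}\bigl(k+\sum_{i=1}^{2j}(-1)^i\ell_i\bigr)$. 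Pairing the resulting expression against $t$ with $\|t\|_{H^{0,-\alpha}}\leq 1$, and distributing the weight $\langle k\rangle^\alpha$ across the factors via the telescoping decomposition $k=(k-\ell_1)+(\ell_1-\ell_2)+\cdots\pm\ell_{2j}$, one arrives at a multilinear form whose vector data matches $E_2^{2j+1}$ from (\ref{E2def}). Applying (\ref{FormC}) with $N=j$, together with $\|\hat q\|_{H^{0,\alpha/(j+1)}}=\|q\|_{H^{\alpha/(j+1),0}}$, yields (\ref{decay}) by duality.

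The main technical obstacle is the bookkeeping in the (\ref{decay}) argument: one must carefully verify that the repeated use of (\ref{C1})-(\ref{C2}) combined with the telescoping of $\langle k\rangle^\alpha$ produces precisely the branched partial-sum vectors appearing in $E_2^{2j+1}$, so that (\ref{FormC}) can actually be invoked. The case $j=0$ is immediate from $r_0=\hat q$ and the Fourier exchange, so only $j\geq 1$ requires the argument above.
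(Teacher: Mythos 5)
Your proposal is correct and follows essentially the same route as the paper: pair $r_j$ against a test function $t$, unwind $T_k^{2j}(1)$ into an iterated Cauchy integral, bound by $\Lambda_1$ with (\ref{FormA}) and duality for the smoothness estimate, and use (\ref{C1})/(\ref{C2}) to pass to Fourier variables so that the resulting chain of Cauchy kernels matches $\Lambda_2$ and (\ref{FormC}) gives the decay estimate. The only cosmetic difference is that the paper records the Fourier-side expression explicitly (with the substitution $k=k_0-k_1+\cdots+k_{2j}$) rather than describing the iterative application of (\ref{C1})/(\ref{C2}); your remark about "telescoping $\langle k\rangle^\alpha$" is already absorbed into the proof of (\ref{FormC}) and need not be repeated here.
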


\begin{proof} Using the definition of $ r _j$, (\ref{multdef}) and the
  theorem of Fubini, we have
$$
\int _{ \complexes } r _j (k) t ( k) \, dk =  \frac 2 { (2\pi)^ { 2j
    + 1} } \int _ { \complexes ^ { 2j +1}} \frac { q ( y _0 ) \bar q
  (y_1)\dots q ( y _ { 2j } ) \hat t ( y_0- y _1 + y _2- \dots + y _ {
    2j } )  }
{ ( \bar y _ 0 - \bar y _ 1 ) ( y_1 - y _ 2) \dots ( y _ { 2j-1 } - y _
  { 2j }) } \, dy. 
$$
The use of Fubini can be justified since we assume that $q$ and $t$
are in the Schwartz class. From the above displayed equation, it is
easy to see that 
$$
|\int _ { \complexes }  r _j (k ) t (k) \, dk | \leq \frac 2 { ( 2\pi )
  ^ { 2j + 1}} \Lambda _1 (  |q|,\dots, |q|, |\hat t |) 
$$
where the form $ \Lambda _1 $ acts on $ 2j +1 $ copies of $|q|$. The
estimate (\ref{FormA}) and duality implies that for $ 0 \leq \alpha <
1$, 
$$
\| |D|^ \alpha r _ j \| _ { L^ 2 } \leq C ( \alpha, j) \| q \| _ { H ^
  {0, \alpha/(j+1) } }.
$$
Combining the cases $ \alpha =0$ and $ \alpha > 0$ gives the estimate
(\ref{smooth}).

To obtain the decay of $r_j$ in estimate (\ref{decay}), we again start
with the definition of $r_j$ in (\ref{multdef}) and  use (\ref{C1}) and
(\ref{C2}) alternately to obtain
\begin{multline*}
\int_{\complexes} t (k ) r _ j (k) \, dk 
= \frac { 2 }{ ( 2 \pi ) ^ { 2j +1} } 
\\
\times 
 \int _ {\complexes ^ { 2j+1} } 
\frac { \hat q ( k _ 0) \bar { \hat q }( k _1) \dots \hat q ( k- k_0+
  k_ 1 - \dots + k _ { 2j-1})
 t (k  )}{ ( \bar k _ 0 - \bar k
  _ 1 )\dots ( \bar k _0 - \bar k _ 1 + \dots - \bar k _
  {2j-1} )(k_0- k )\dots
( k_0 -k_1+ \dots + k_{2j-2} -k )  }
\\ \qquad  dk_0\dots dk_{2j-1}dk.
\end{multline*}
We make the change of variables $ k = k_0 - k _1+\dots + k_{2j}$, $dk
= dk_{2j}$ and obtain
\begin{multline*}
\int _{ \complexes} t(k) r _j (k) \, dk 
=  \frac { 2} { ( 2\pi ) ^ { 2j+1} } \\
\times  \int _ { \complexes ^ { 2j + 1}
} 
\frac { \hat q ( k _ 0) \bar { \hat q }( k _1) \dots \hat q ( k _ {
    2j } ) t ( k _ 0 - k_1+\dots + k _ { 2j })}{ ( \bar k _ 0 - \bar k
  _ 1 )\dots ( \bar k _0 - \bar k _ 1 + \dots - \bar k _
  {2j-1} )(  k _ { 2j } -  k _ { 2j -1 } )\dots
(  k _ { 2j } -  k _ { 2j -1 } + \dots  k _1 )} d\kappa. 
\end{multline*}
On the right, $d\kappa= dk_0\, dk_1 \dots dk_{2j}$. 
Now the estimate   (\ref{FormC}) quickly leads to the result
(\ref{decay}). 
\end{proof}

\section{The remainder term}
\label{best}

The final section gives  estimates for the remainder term. The
moral of this section is that when $ q \in H ^ { \epsilon , \epsilon }$
with $ \epsilon >0$, then the operator $ T^N_k(\mu_1) $  becomes smoother and decays more rapidly as $N$ increases. 
This allows us  to estimate the remainder 
 $r ^ { (N) } $ (defined in (\ref{remdef}) for $N$ large. 
The details are a bit  tedious. 
We begin by listing several  properties of the operator $T_k$ that start to
make the previous sentences precise.   

\begin{proposition} \label{mess}
Let $ q \in H^ { \epsilon, \epsilon}$ with $0< \epsilon < 1$ and suppose
that $\| q \| _ { H^ { \epsilon , \epsilon } } \leq M_0$.  
If $ \mu_1 = \mu _1(q; \cdot, \cdot)$ is the solution constructed in
Proposition \ref{MUEX}, 
we have the
following estimates for $ T _ k ^ { N } ( \mu_1) $.  Assume that
$p$ and $ \tilde p$ are related by $ 1/p = 1/2 + 1/ \tilde p$. 

a) We  have $ T _k ^ N ( \mu _1)  \in L^ {\tilde p} $ if $ 1/ \tilde p \in [0, \frac {
    \epsilon N } { 2 } ) \cap [0,  \frac 1 2 ) $ and 
\begin{equation} \label{hlsn}
\| T _ k ^ N ( \mu _1)  \| _ { L ^ {\tilde p} } \leq  C \| q \|^N _ { H ^ {
    \epsilon, \epsilon} } \| \mu _1 \| _ { L ^ \infty } . 
\end{equation}

b) For $ 1/ \tilde p \in ( 0, 1/2 )$ and $ j \geq 0$, we have 
\begin{equation}
\label{afrn} 
\| T_ k ^ { N + 2 j } ( \mu _1) \| _ { L ^ {\tilde p} } 
\leq C \langle k \rangle ^ { - j \epsilon } \| q \| _ { H ^ { \epsilon,
    \epsilon } } ^ { 2j }  \| T_k ^ N ( \mu _1 ) \| _ { L ^
{\tilde  p} }.
\end{equation}

\note{Conditions on $p$ below?}
c)  Provided $j \geq 0$ satisfies $ j\epsilon < 2 /p'$,  we have the estimate
\begin{equation} 
\label{cp}
\| T _ k ^ { N + j } ( \mu _ 1 ) \| _ { L ^ {\tilde p} _ { j \epsilon} } 
\leq C \| q \| _ { H ^ { \epsilon, \epsilon }}^ j \| T _ k ^ N ( \mu _
1) \| _ { L ^ {\tilde p} } .
\end{equation}

If in addition, $q ' \in  H^ { \epsilon, \epsilon }$ with $ \| q ' \| _ { H
  ^ { \epsilon, \epsilon } } \leq M_0$, we 
 have the following estimates for differences. 
Given $ 1/\tilde
p \in
[0, 1/2)$, there exist $N$ such that 

a')
$$
 \| T^N _ { k,q} ( \mu _1 ( q; \cdot, k ) ) - T_ { k,q' } ^ N ( \mu _1 (
 q'; \cdot, k ) ) \| _ { L ^ {\tilde p} } 
\leq C ( M _0 ) \|q - q' \|_ { H ^ { \epsilon, \epsilon } } .
$$

b') For $ 1/\tilde p$ in $(0,1/2)$ and $ j \geq 0$, we have 
$$
 \| T _ { k,q } ^ { N + 2j} ( \mu _ 1( q;\cdot ,k) ) - 
T^ { N + 2j }  _ { k,q'} ( \mu _ 1 ( q'; \cdot, k ) ) \| _ { L^
  {\tilde p} } 
\leq C(M_0) \langle k \rangle  ^ { -j\epsilon} \| q - q' \| _ { H ^ {
    \epsilon, \epsilon} } .
$$

c') Given $\tilde p$ in $[2, \infty)$ and $ j$ and $ \epsilon$ with $ 0 \leq
  j \epsilon < 2/ p'$, we have 
$$ \| T _ { k,q } ^ { N+j} ( \mu_ 1(q; \cdot, k )) - T _ { k, q' } ^
{ N + j } ( \mu _ 1 ( q'; \cdot, k) ) \| _ { L ^{\tilde p} _ { j \epsilon } } 
\leq C (M _0) \| q - q' \| _ { H ^ {\epsilon ,\epsilon }}. $$
\end{proposition}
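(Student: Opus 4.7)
The plan is to establish each of the six estimates by iterating the operator $T_k$, using the pointwise bound $|T_k f(x)| \leq \frac{1}{2} I_1(|q||f|)(x)$ to reduce every step to a (weighted) Hardy-Littlewood-Sobolev inequality (\ref{HLS}), the Vekua $L^\infty$ bound (\ref{vekua}), the Stein-Weiss inequality (\ref{SW}), or the Astala-Faraco-Rogers decay estimate (\ref{AFR}).

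For part (a) I would chain H\"older's inequality with (\ref{HLS}). A single application of $T_k$ gives $\|T_k f\|_{L^{\tilde p}} \leq C\|q\|_{L^r}\|f\|_{L^s}$ with $1/r + 1/s = 1/2 + 1/\tilde p$, and since $H^{\epsilon,\epsilon} \subset L^r$ for $1/r$ up to (but not reaching) $1/2 + \epsilon/2$ by (\ref{embed}), each application improves the reciprocal of the integrability exponent by at most $\epsilon/2$. Starting from $\mu_1 \in L^\infty$ and iterating $N$ times fills out the range $1/\tilde p \in [0, N\epsilon/2) \cap [0,1/2)$, with constants accumulating as $\|q\|^N_{H^{\epsilon,\epsilon}}\|\mu_1\|_{L^\infty}$; the endpoint $\tilde p = \infty$ at each intermediate step is handled by (\ref{vekua}) instead of (\ref{HLS}). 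For part (b) I would apply (\ref{AFR}) $j$ times with input $T_k^{N+2(j-1)}\mu_1$, extracting a factor of $\langle k\rangle^{-\epsilon}\|q\|^2_{H^{\epsilon,0}}$ at each step.

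Part (c) is the most delicate. I would iterate the weighted Hardy-Littlewood-Sobolev estimate (\ref{SW}) with equal weights on input and output, in which case the exponent relation collapses to the unweighted $1/\tilde p = 1/p - 1/2$. Combining this with H\"older in the form
$$\|q g\|_{L^p_{\beta+\epsilon}} \leq \|\langle \cdot\rangle^\epsilon q\|_{L^2}\,\|g\|_{L^{\tilde p}_\beta}, \qquad 1/p = 1/2 + 1/\tilde p,$$
shows that each application of $T_k$ gains one factor of the weight $\langle x\rangle^\epsilon$ without moving $\tilde p$. Iterating $j$ times produces $T_k^{N+j}\mu_1 \in L^{\tilde p}_{j\epsilon}$, and the constraint $j\epsilon < 2/p'$ is precisely the Stein-Weiss admissibility condition from (\ref{SW}) applied at the final step.

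For (a'), (b'), (c') I would exploit that $T_k^M$ is multilinear in $q$ (appearing $M$ times) and linear in the input, decomposing telescopically
$$T^N_{k,q}\mu_1(q;\cdot,k) - T^N_{k,q'}\mu_1(q';\cdot,k) = \sum_{i=0}^{N-1} T^i_{k,q}(T_{k,q}-T_{k,q'})T^{N-1-i}_{k,q'}\mu_1(q;\cdot,k) + T^N_{k,q'}\bigl[\mu_1(q;\cdot,k) - \mu_1(q';\cdot,k)\bigr].$$
The operator difference $(T_{k,q}-T_{k,q'})f = \frac{1}{2}\cauchy(e_k(q-q')\bar f)$ factors out $q - q'$, and Propositions \ref{MUEX}, \ref{LPREG} supply the $L^{\tilde p}$ control of $\mu_1(q)-\mu_1(q')$ by $\|q-q'\|_{H^{\epsilon,\epsilon}}$. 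Applying Lemma \ref{mlllc} to each multilinear summand with the bounds from (a), (b), (c) already in hand then delivers the Lipschitz estimates with the same integrability, decay, and weight ranges. The main obstacle will be the bookkeeping of exponents: at every iteration one must verify that the current reciprocal exponent lies in the admissible range for (\ref{HLS}) or (\ref{SW}), and one must track how the Stein-Weiss constraint $j\epsilon < 2/p'$ caps the number of weight-gaining iterations in part (c). Once this is organized carefully, all six estimates follow from the same pattern.
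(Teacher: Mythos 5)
Your proposal follows the paper's proof step by step: part (a) is the same $f \mapsto qf \mapsto T_k(f)$ chain through H\"older, (\ref{embed}), (\ref{HLS}) and (\ref{vekua}), with $1/\tilde p$ gaining up to $\epsilon/2$ per iteration; part (b) is the same iteration of (\ref{AFR}); part (c) is the same iteration of (\ref{SW}) with matched weights so that $1/\tilde p$ is held fixed while the weight exponent advances by $\epsilon$ per step, the constraint $j\epsilon < 2/p'$ being exactly the Stein--Weiss admissibility condition invoked in the paper; and (a')--(c') are handled by the same observation that $T_k^M(\mu_1)$ is multilinear in $q$ and $\mu_1$, combined with Lemma~\ref{mlllc} and the Lipschitz dependence of $\mu_1$ on $q$ from Proposition~\ref{MUEX}. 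Your telescoping decomposition merely makes explicit what the paper leaves implicit in the phrase ``each term is a multi-linear expression.''
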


\begin{proof} Note that by Proposition \ref{LPREG}, we have that $ \mu
  _1$ lies in $L^\infty ( \complexes^2)$. 
The first estimate is a consequence of the
  Hardy-Littlewood-Sobolev inequality (\ref{HLS}), the estimate
  (\ref{vekua}),  H\"older's
  inequality, and the observation  (\ref{embed}) that  $ H ^ {
    \epsilon, \epsilon} 
  \subset L^p$, if $ 1/p \in [ 1/2 - \epsilon/2, 1/2 + \epsilon /2 )
    $. Thus the maps 
$
f \rightarrow qf \rightarrow T_k(f) 
$
take  $ L ^ { \tilde p } \rightarrow L^ t \rightarrow L ^ { \tilde
  p_1}$   where $ 1/ t \in  [ 1/ \tilde p + 1/2 - \epsilon /2,   1/
  \tilde p  + 1/2 +  \epsilon /2 )\cap [ 0, 1/2)  $ 
and $  1/ \tilde p_1 \in [ 0, 1 / \tilde p  + \epsilon /2) \cap [
    0,1/2)$, and 
$$
\| T _k ( f ) \| _ { L ^ { \tilde p _1 }} \leq C\|q\|_{H^ {
    \epsilon, \epsilon }} \| f\| _ { L^ { \tilde p}}.
$$
Iterating this estimate gives a). 

The estimate b) follows quickly from (\ref{AFR}) and can be found in 
the work of Astala, Faraco, and Rogers 
\cite{MR3382582}. 

The third estimate (\ref{cp}) depends on the  result (\ref{SW}) on
fractional integration in weighted Lebesgue spaces.   
Using the estimate (\ref{SW}) and H\"older's inequality,  the maps  $ f \rightarrow qf \rightarrow T_k(f)$ will map $ L^ {
  \tilde p } _ \alpha \rightarrow L^ p _ { \alpha + \epsilon }
\rightarrow L ^ { \tilde p } _ { \alpha + \epsilon }$ where as usual $
1/p = 1/\tilde p + 1/2$ and the second step requires the condition
that $ - 2 / \tilde p < \alpha + \epsilon < 2 / p'$  in order to
use our result on fractional integration (\ref{SW}). 

The estimates for the differences follow by recognizing that each
term is a multi-linear expression in several copies of $q$ and $
\mu_1$, the continuity of $ \mu_1$ with respect to $q$
given in Proposition 
\ref{MUEX} and the result of Lemma \ref{mlllc}
\end{proof}

We begin by showing that if $ q \in H ^ { \epsilon , \epsilon }$ then
given  $ \alpha $ we may choose $ N = N ( \alpha) $ so that $ r ^ { ( N
  )} $ lies in $H ^ { 0 , \alpha}$.

\begin{lemma}
\label{rdl}
 Let $ q \in H ^ { \epsilon, \epsilon } $ with $0< \epsilon <1 $. Given $
  \alpha \in \reals$, there exist $N$ such that $ r ^ { (N) }$ lies
  in $ H ^ { 0, \alpha }$. If $ q $ and $ q '$  lie in $ \{ q \in H ^ {
    \epsilon, \epsilon } : \| q \| _ { H ^ { \epsilon , \epsilon } }
  \leq M _0\}$, then 
\begin{align} 
\label{remdecays} \| r ^ { (N) } \| _ { H ^ { 0 , \alpha }} & \leq C ( M_0, \alpha ) \\ 
\label{diffdecays} \| r ^ { (N) } ( q; \cdot ) - r ^ { (N) } ( q'; \cdot ) \| _ { H ^ {
    0, \alpha } } 
& \leq C( M_0, \alpha )  \| q - q ' \| _ { H ^ { \epsilon, \epsilon } }
  . 
\end{align}
\end{lemma}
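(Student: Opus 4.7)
The plan is to turn the pointwise decay of $T_k^{2N}(\mu_1)$ in $k$ supplied by Proposition \ref{mess} into an $H^{0,\alpha}$ estimate for $r^{(N)}$ via Hölder's inequality. Fix $p' \in (1,2)$ close enough to $2$ so that the embedding (\ref{embed}) gives $\|q\|_{L^{p'}} \leq C\|q\|_{H^{\epsilon,\epsilon}}$, and let $p$ be the conjugate exponent. Applying Hölder to the definition (\ref{remdef}) yields
\[
|r^{(N)}(k)| \leq \tfrac{1}{\pi} \|q\|_{L^{p'}} \|T_k^{2N}(\mu_1(\cdot,k))\|_{L^p}.
\]

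To control the second factor uniformly in $k$ and with decay, I first choose an even integer $N_0 = 2M$ large enough that $1/p < \epsilon N_0/2$. Then Proposition \ref{mess}(a) gives $\sup_k \|T_k^{N_0}(\mu_1(\cdot,k))\|_{L^p} \leq C(M_0)$. Next, writing $2N = N_0 + 2j$ and invoking Proposition \ref{mess}(b), I obtain
\[
\|T_k^{2N}(\mu_1(\cdot,k))\|_{L^p} \leq C(M_0)\,\langle k\rangle^{-j\epsilon}.
\]
Combining the two steps gives the pointwise bound $|r^{(N)}(k)| \leq C(M_0)\langle k\rangle^{-j\epsilon}$. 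Since $\complexes$ is two dimensional, $\langle k\rangle^{\alpha} r^{(N)}$ is in $L^2(dk)$ whenever $2j\epsilon - 2\alpha > 2$, i.e., $j > (\alpha+1)/\epsilon$. Fixing such a $j$ and setting $N = M + j$ proves (\ref{remdecays}).

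For the Lipschitz bound (\ref{diffdecays}), I split
\[
r^{(N)}(q;k) - r^{(N)}(q';k) = \mathrm{I}(k) + \mathrm{II}(k),
\]
where $\mathrm{I}$ replaces $q$ by $q-q'$ in the outer factor while keeping $\bar T_{k,q}^{2N}(\mu_1(q;\cdot,k))$, and $\mathrm{II}$ keeps $q'$ in the outer factor and takes the difference $\bar T_{k,q}^{2N}(\mu_1(q;\cdot,k)) - \bar T_{k,q'}^{2N}(\mu_1(q';\cdot,k))$. Term $\mathrm{I}$ is handled exactly as above, with an extra factor $\|q-q'\|_{H^{\epsilon,\epsilon}}$ coming from $\|q-q'\|_{L^{p'}}$. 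Term $\mathrm{II}$ is handled by Hölder together with Proposition \ref{mess}(b'), which supplies the same $\langle k\rangle^{-j\epsilon}$ decay now multiplied by $\|q-q'\|_{H^{\epsilon,\epsilon}}$. The same choice of $N$ then yields (\ref{diffdecays}).

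The only real obstacle is bookkeeping: one must synchronize the choice of $p$ (forced by the embedding of $H^{\epsilon,\epsilon}$ into $L^{p'}$) with the choice of $M$ (needed to put $T_k^{2M}(\mu_1)$ in $L^p$ uniformly in $k$) and with the final choice of $j$ (needed to beat both the weight $\langle k\rangle^{2\alpha}$ and the two-dimensional volume element). Once the exponents align, every estimate required is already collected in Proposition \ref{mess}.
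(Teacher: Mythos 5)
Your proof follows the same approach as the paper: apply H\"older's inequality to (\ref{remdef}) with one factor $q\in L^{p'}$ (from the embedding (\ref{embed})) and the other $T_k^{2N}(\mu_1)\in L^p$; then use parts a) and b) of Proposition \ref{mess} to first fix a base power $N_0$ placing $T_k^{N_0}(\mu_1)$ in $L^p$ uniformly in $k$, and then to gain $\langle k\rangle^{-j\epsilon}$ decay, choosing $j$ large enough to win against the weight $\langle k\rangle^{\alpha}$ and the planar volume element. For the Lipschitz estimate the paper invokes the generic multi-linearity lemma (Lemma \ref{mlllc}) together with Proposition \ref{MUEX}, while you split directly into two terms and use part b') of Proposition \ref{mess}; these are the same underlying telescoping argument, just packaged differently, so both the statement and the proof align with the paper's.
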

\begin{proof} This is straightforward and follows an argument in
  Perry \cite{PP:2011a}. Since $ q \in H ^ { \epsilon, \epsilon }$,
  according to (\ref{embed}) we have $ q \in L^
  p$ with $p $ defined by $ 1/p = 1/2 + \epsilon / 4$, say.  We let $
  p'$ be the conjugate exponent as usual. According
  to parts a) and b) of  Proposition \ref{mess},  we may choose $N$ so
  that 
$$
\| T _ k ^ { 2N } ( \mu _1 ) \| _ { L ^  { p ' } } \leq C ( M_0,
\alpha ) \langle k \rangle ^ { -2 - \alpha } . 
$$
Thus by H\"older's inequality, we have 
$$
|r ^ { ( N)} (k) | \leq  \frac 1 \pi \int _ \complexes  | e_k (x) q(x) T _k ^ { 2N}
( \mu _1 ) |  \, dx 
\leq \frac 1 \pi \| q \| _ { L^ p } \| T _ k ^ { 2N } ( \mu _ 1) \| _
     { L ^ { p'}} 
\leq C  \langle k \rangle ^ { -2-\alpha } .
$$
This gives the first conclusion (\ref{remdecays}). 

To estimate the differences, we observe that   $ r^ { (N)}$ is a
multi-linear operator in $ q$, $ \bar q$ and $ \mu_1$ and use Lemma \ref{mlllc}
and Proposition \ref{MUEX}  to estimate $ \mu _1(q;\cdot) - \mu_1 ( q' ;
\cdot ) $ in terms of 
$ q- q'$.  With these observations the continuity (\ref{diffdecays})
follows from Lemma \ref{mlllc}. 

\end{proof}
\note{ Need to do the last part more carefully. }

\comment{
Next, we show that if $ q \in H ^ { \alpha, \epsilon }$ with $ \alpha
\in ( 0, 1 ) $ and $ \epsilon > 0$ then $  r $ lies in $H^ { 0, \alpha
}$.

\begin{proposition} \label{decaypart}
Let $ \alpha \in (0, 1)$ and $ \epsilon > 0$. Suppose that $ q $ and $
q '$ lie in $ H ^ { \alpha , \epsilon }$ with $ \| q \| _ { H ^ {
    \alpha , \epsilon } } \leq M _0$
and $ \| q'  \| _ { H ^ {
    \alpha , \epsilon } } \leq M _0$, then we may find a constant $ C
= C(M _0, \alpha, \epsilon ) $ so that 
\begin{align}
\label{decaya}
\| { \cal R } ( q) \| _ { H ^ { 0, \alpha }}  & \leq C \\
\label{decayb}
\| { \cal R } ( q) - { \cal R } ( q')  \| _ { H ^ { 0, \alpha }} &\leq
C  \|q - q' \| _ { H ^ { \alpha, \epsilon }}.
\end{align}
If in addition $ \alpha \in ( 0, 1/2)$,  we have 
\begin{equation} 
\label{decayc}
\| { \cal R } ( q ) - \hat q \| _ { H ^ { 0, 2\alpha }} \leq C (
M_0). 
\end{equation}
\end{proposition}

\begin{proof} We use the expansion for $ r = { \cal R } (q)$ in
  (\ref{exp}) and write $ r = \sum _ { j = 0 } ^ { N-1} r _j + r ^ {
    (N) }$. According to Proposition \ref{term} we have 
\begin{equation}
\| r _ j \| _ { H ^ { 0, \alpha } } \leq C ( j, \alpha ) \| q \| _ { H
  ^ { \alpha / ( j + 1 ) , 0 } } ^ { 2j +1} .
\end{equation}
The estimate (\ref{decaya}) follows from Proposition \ref{term} and
estimate (\ref{remdecays}) in Lemma \ref{rdl}. 
A similar argument and Proposition \ref{mlllc} allows us to estimate
the differences and obtain \ref{decayb}. The final estimate
(\ref{decayc}) follows using Proposition \ref{term} and Lemma
\ref{remdecays} since $ r_ 0 = \hat q$. 
\end{proof}
}

We employ a similar strategy to estimate $ \|r \| _ { H ^ { \alpha , 0
}}$.  As part of this we will  need a lemma to show that $ r ^ { (N)
}$ is smooth. The  proof of this result is more  involved. 

To estimate $ r ^ { ( N) } (k) $, we begin by computing 
$$
\frac \partial { \partial k }   r ^ { (N)} (k)
=- \frac 1 \pi \int_\complexes y q(y) e_k (y)  \bar T_k ^ { 2N} ( \mu _1 ) (y) \,
dy 
+ \frac 1 \pi \int _ { \complexes } q ( y ) e _k (y) \frac \partial {
  \partial k } \bar T _ k ^ { 2N } ( \mu _1 ) ( y) \, dy. 
$$
Thus we need to study the expressions $ \frac \partial { \partial 
 \bar k } T _ k ^ { 2N } ( \mu _1 ) $ and $ \bar y T _ k ^ { 2N } ( \mu _1
)$. 

Our first step is to derive an expression for  the  $\partial/ \partial
  \bar k $-derivative of the function $ T _ k ^ { 2N } ( \mu _1)
$. This generalizes  the $ \partial / \partial \bar k $
equation for $ \mu _1$. For this exercise, we assume that $q$
is  in the Schwartz space, which will allow us to concentrate on the
formulae rather than convergence. 

\begin{lemma} 
\label{t2nd} 
If $ q$ is in $ {\cal S } ( \complexes)$,  then 
$$
\frac \partial { \partial \bar k }  T_ k ^ { 2N } ( \mu _ 1) 
= \frac 1 2 \bar r (k ) T _ k ^ { 2N } ( \mu _2) 
+ \frac 1 { 2 \pi } \sum _ { j = 1} ^ N T _ k ^ { 2j -1} ( 1) \int _
{\complexes} \bar q ( x) e _ k ( - x)   T _ k  ^ { 2N - 2j} ( \mu _ 1
) (x) \, dx.
$$
\end{lemma}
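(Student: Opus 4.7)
The plan is to expand $T_k^{2N}(\mu_1)(x,k)$ as an explicit $2N$-fold Cauchy-type integral, differentiate in $\bar k$ under the integral sign, and identify the resulting pieces with the right-hand side of the claim. Since $T_k f=\tfrac{1}{2}\cauchy(e_k q\bar f)$ is antilinear, iterating $2N$ times produces alternating conjugates on the successive denominators; the accumulated exponential factors consolidate (using the character identity $e_k(y)e_k(z)=e_k(y+z)$ together with $e_{-k}(z)=\overline{e_k(z)}$) into a single phase $e_k(w)$ with $w=y_1-y_2+y_3-\cdots-y_{2N}$. One arrives at
\begin{equation*}
T_k^{2N}(\mu_1)(x,k)=\frac{1}{(2\pi)^{2N}}\int\frac{q(y_1)\bar q(y_2)\cdots\bar q(y_{2N})\,\mu_1(y_{2N},k)\,e_k(w)}{D_1 D_2\cdots D_{2N}}\,dy_1\cdots dy_{2N},
\end{equation*}
where $D_1=x-y_1$, $D_{2j-1}=y_{2j-2}-y_{2j-1}$ for $j\ge 2$, and $D_{2j}=\bar y_{2j-1}-\bar y_{2j}$. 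Differentiating under the integral then splits naturally into a \emph{phase contribution} from $\partial_{\bar k}e_k(w)=\bar w\,e_k(w)$, and a \emph{tail contribution} $T_k^{2N}(\partial_{\bar k}\mu_1)$ from the $\mu_1(y_{2N},k)$ factor.

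The decisive tool for the phase contribution will be the telescoping identity $\bar w=\sum_{j=1}^N(\bar y_{2j-1}-\bar y_{2j})=\sum_{j=1}^N D_{2j}$, which cancels exactly one conjugated denominator in each summand. For the $j$-th summand, the missing $D_{2j}$ decouples the integrand into a product of two factors---one over $y_1,\ldots,y_{2j-1}$ and one over $y_{2j},\ldots,y_{2N}$---once the phase is factored as $e_k(y_1-y_2+\cdots+y_{2j-1})\cdot e_k(-y_{2j}+\cdots-y_{2N})$. The first factor is recognizable as $(2\pi)^{2j-1}T_k^{2j-1}(1)(x)$. For the second, holding $y_{2j}$ fixed and integrating over $y_{2j+1},\ldots,y_{2N}$ reproduces $(2\pi)^{2N-2j}T_k^{2N-2j}(\mu_1)(y_{2j},k)$; then integrating the remaining $\bar q(y_{2j})\,e_k(-y_{2j})$ against this in $y_{2j}$ and collecting the powers of $2\pi$ yields exactly $\frac{1}{2\pi}T_k^{2j-1}(1)(x)\int_{\complexes}\bar q(y)e_k(-y)T_k^{2N-2j}(\mu_1)(y,k)\,dy$.

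For the tail contribution I will first establish the elementary $\bar\partial_k$ equation $\partial_{\bar k}\mu_1=\tfrac{1}{2}\bar r(k)\mu_2$. Specializing the phase calculation above to $N=1$ and using $T_k^2\mu_1=\mu_1-1$ from (\ref{IE2}) gives $\partial_{\bar k}\mu_1=\tfrac{1}{2}\bar r\,T_k(1)+T_k^2(\partial_{\bar k}\mu_1)$. On the other hand, $\mu_2=T_k(1)+T_k^2\mu_2$ follows from (\ref{IE1A})--(\ref{IE1B}), so multiplying by $\tfrac{1}{2}\bar r$ (legitimate since $T_k^2$ is linear) shows that $\tfrac{1}{2}\bar r\mu_2$ satisfies the same equation; Fredholm injectivity of $I-T_k^2$ from Proposition \ref{MUEX} then forces equality. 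Since $T_k$ is antilinear and $2N$ is even, $T_k^{2N}(\tfrac{1}{2}\bar r\mu_2)=\tfrac{1}{2}\bar r\,T_k^{2N}\mu_2$, which combines with the phase contribution to give the stated identity. The main obstacle I anticipate is the bookkeeping in the phase analysis: tracking the alternation of $q$ and $\bar q$, of conjugated and unconjugated denominators, and the signs in the phase carefully enough to verify that the inner block matches exactly $T_k^{2N-2j}(\mu_1)$ and that the overall coefficient reduces to $\frac{1}{2\pi}$.
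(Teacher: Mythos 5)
Your proposal is correct and follows essentially the same route as the paper: expand $T_k^{2N}(\mu_1)$ as an iterated integral, differentiate to obtain $T_k^{2N}(\partial\mu_1/\partial\bar k)$ plus the telescoping phase sum, and then invoke $\partial\mu_1/\partial\bar k=\frac12\bar r\,\mu_2$ together with the $\complexes$-linearity of $T_k^{2N}$ for $2N$ even. The only difference is that you derive the $\partial/\partial\bar k$-equation yourself via the integral equation and injectivity of $I-T_k^2$, where the paper simply cites Perry; that derivation is valid.
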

\begin{proof}
Taking the derivative with respect to $ \bar k$ gives 
\begin{align*}
&\frac \partial { \partial \bar k } T _ k ^ { 2N } ( \mu _1 ) (x)  \\
&= T _ k ^ { 2N } ( \frac { \partial \mu _ 1 } { \partial \bar k }) (x) 
+ \frac 1 { 2\pi } \sum _ { j = 1 } ^ N T _ k ^ { 2j - 1} ( 1) (x)
\cdot \int _ { \complexes } \bar q ( x _ { 2j } ) e _ k ( -x_ { 2j } )
T _ k   ^ { 2N - 2j } ( \mu _1 )(x) \, dx _ { 2j } .
\end{align*}
If we recall the $ \partial/ \partial \bar k $-equation  for $ \mu$ (see Perry
\cite{PP:2011a}, for example), $ \frac { \partial } { \partial \bar k } \mu _1 = \frac
1 2 \bar r \mu _ 2$, we obtain the Lemma. 
\end{proof}

\begin{lemma} 
\label{t2ny} 
If $ q\in { \cal S } ( \complexes)$ and $ 1 \leq n \leq N$, then 
\begin{align*} 
&\bar x T _ k ^ N (f) (x) \\ &=  \eta (x) (\sum _ { j = 1} ^ n  \tilde T _ k ^ { j
  -1} (1)(x) ( \frac 1 { 2\pi }  \int _ { \complexes } q (y) e _ k( y)
T _k  ^{ N- j}(f) (y) \, dy )  ^{ *j-1 } 
+  \tilde T _ k ^ n ( ( \bar \cdot ) T _ k ^ { N -n } (f) )
(x)) .
\end{align*}
Here $ \eta (x) = \bar x / x $ and $ \tilde T_k  = T _ { k , \tilde q
} $  and $ \tilde q = \eta q$.  We use $ x^{*j}$ to denote $j$
applications of the map $ x \rightarrow \bar x$. Thus $ x^ {*j} = x$ if $
j $ is even  and $ x^ {*j}= \bar x $ if $ j $ is odd. 
\end{lemma}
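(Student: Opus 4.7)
The plan is to argue by induction on $n$, with $N \geq n \geq 1$ held fixed, using the single algebraic identity
$$\bar y\, q(y) = y\, \tilde q(y), \qquad \tilde q = \eta q = (\bar y / y)\, q,$$
which converts a factor of $\bar y$ sitting inside the Cauchy integral that defines $T_k$ into a factor of $y$ paired with the modified potential $\tilde q$. Interpreted operator-theoretically, a $\bar y$ appearing inside $T_k$ is absorbed as $\tilde T_k$ applied to the argument multiplied pointwise by $\bar\cdot$.

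For the base case $n = 1$, I would unfold the outermost layer of $T_k^N(f)(x) = T_k(T_k^{N-1}f)(x)$ so that the outermost integration variable $y$ is exposed in a Cauchy integral with kernel $1/(x - y)$. Splitting $\bar x = \bar y + (\bar x - \bar y)$ inside the integrand gives two pieces. In the $\bar y$-piece, the identity above rewrites $\bar y\, q(y)$ as $y\, \tilde q(y)$ and the extra factor $y$ is absorbed into the argument to produce precisely the tail term $\tilde T_k^{1}(\bar\cdot\, T_k^{N-1}(f))(x)$ of the lemma. In the $(\bar x - \bar y)$-piece, one factors out $\eta(x)$ and rearranges so that the singular kernel collapses, leaving a non-singular integral of $q\, e_k$ against the appropriate iterate of $f$; this produces the $j = 1$ summand $\tilde T_k^0(1)(x)\cdot\bigl(\frac{1}{2\pi}\int q\, e_k\, T_k^{N-1}(f)\bigr)^{*0}$, multiplied by $\eta(x)$.

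For the inductive step $n \Rightarrow n+1$, the only term at level $n$ that is amenable to further expansion is the tail $\tilde T_k^n(\bar\cdot\, T_k^{N-n}(f))(x)$, since each of the $n$ sum terms is already in final form. I would unfold one more application inside, $T_k^{N-n}(f)(y) = T_k(T_k^{N-n-1}f)(y)$, to expose a new innermost variable $z$, and apply the identical $\bar y = \bar z + (\bar y - \bar z)$ split there. The $\bar z$-piece becomes the new tail $\tilde T_k^{n+1}(\bar\cdot\, T_k^{N-n-1}(f))(x)$ via another use of the key identity. The $(\bar y - \bar z)$-piece, after a Fubini rearrangement (justified by the Schwartz hypothesis on $q$), decouples into the $x$-dependent prefactor $\tilde T_k^n(1)(x)$ times the $x$-independent scalar $\bigl(\frac{1}{2\pi}\int q\, e_k\, T_k^{N-n-1}(f)\bigr)^{*n}$; this is the new $j = n+1$ summand in the formula at level $n+1$.

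The hardest part will be bookkeeping the conjugations. Each application of $T_k$ or $\tilde T_k$ complex-conjugates its argument, so after $j - 1$ nested unfoldings the accumulated conjugations produce exactly the $^{*(j-1)}$ superscript attached to the $j$th scalar integral; verifying that the count comes out correctly at each inductive step, and that the ``new'' conjugation introduced at each stage lands on the scalar factor and not on the $\tilde T_k^{j-1}(1)$ prefactor, is where most of the care is required. A secondary technical point is checking that the residual $(\bar x - \bar y)/(x - y)$ and $(\bar y - \bar z)/(y - z)$ terms really do decouple from the outer variable up to the prefactor $\tilde T_k^{j-1}(1)(x)$; this is what the Schwartz assumption on $q$ is doing for us by guaranteeing absolute convergence and permitting Fubini.
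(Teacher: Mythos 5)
Your overall iterative structure, and the absorption mechanism $\bar y\,q(y)=y\,\tilde q(y)$, are the right ingredients, but the decomposition at the heart of your base case does not work, and the same defect propagates through the induction. You split $\bar x=\bar y+(\bar x-\bar y)$ inside $\frac{1}{2\pi}\int\frac{\bar x\,q(y)e_k(y)\overline{T_k^{N-1}f(y)}}{x-y}\,dy$. The $\bar y$-piece is fine and yields $\tilde T_k\bigl((\bar\cdot)\,T_k^{N-1}f\bigr)(x)$ (note, though, that it arrives \emph{without} the prefactor $\eta(x)$ that the lemma attaches to the tail term, so it already fails to match the asserted formula). The real problem is the $(\bar x-\bar y)$-piece: its kernel is $\frac{\bar x-\bar y}{x-y}=e^{-2i\arg(x-y)}$, a non-constant unimodular function of $x-y$. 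The resulting integral $\frac{1}{2\pi}\int e^{-2i\arg(x-y)}\,q(y)e_k(y)\overline{T_k^{N-1}f(y)}\,dy$ does not factor as $\eta(x)$ times an $x$-independent scalar --- no Fubini or absolute-convergence argument makes a kernel depending on $\arg(x-y)$ ``collapse'' --- so the claimed $j=1$ summand is not produced. The analogous split $\bar y=\bar z+(\bar y-\bar z)$ fails in the inductive step for the same reason.

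The manipulation that actually works (and is the one the paper uses) is different: factor $\frac{\bar x}{x-y}=\frac{\bar x}{x}\cdot\frac{x}{x-y}=\eta(x)\bigl(1+\frac{y}{x-y}\bigr)$, i.e.\ extract $\eta(x)$ first and then split $x=(x-y)+y$ in the \emph{numerator}. The ``$1$'' term is then honestly $\eta(x)$ times the $x$-independent scalar $\frac{1}{2\pi}\int q\,e_k\,\overline{T_k^{N-1}f}\,dy$, and the $\frac{y}{x-y}$ term puts a factor of the integration variable onto $q\,\overline{T_k^{N-1}f}$, which is absorbed into $\tilde T_k\bigl((\bar\cdot)\,T_k^{N-1}f\bigr)$ by exactly the mechanism you describe; iterating this one-step identity gives the lemma. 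Your discussion of the conjugation bookkeeping is sensible, but it is moot until the decomposition itself is repaired.
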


\begin{proof}
We begin with the identity, 
\begin{align*}
\bar x T _ k (f) (x) & = \frac { \bar x } { x}   \frac 1 { 2 \pi }
\int _ { \complexes } q ( y ) e _ k (y )  \bar f ( y ) \, dy  
+ \frac {\bar x} x  \frac 1 { 2 \pi } \int _ { \complexes } \frac { q ( y ) e _ k (y ) y
  \bar f (y) } {x- y} \, dy  \\
&= \frac { \bar x } { x}   \frac 1 { 2 \pi } \int _ { \complexes }  q
(y) e _ k ( y ) \bar f (y) \, dy  + \frac {\bar x} x \tilde T _k (
(\bar\cdot )  f ) (x) . 
\end{align*}
Iterating this result gives the Lemma. 
\end{proof}

We are ready to give an estimate on the smoothness of the function $
r ^ { (N) }$. 
\begin{lemma}
\label{smoothpart} 
 Let $ q \in H ^ { \epsilon, \epsilon }$ with $ \|
  q \| _ { H  ^ { \epsilon , \epsilon } } \leq M _0$. Then there exist
  $N  = N ( \epsilon )$ such that 
$$
\| \frac \partial { \partial  k }  r ^ { (N) } \| _ { L ^ 2 } \leq
C ( M _0).
$$
If $ q, q' $ are both in $ \{ q : \| q \| _ { H ^ { \epsilon ,
    \epsilon }} \leq M _0 \}$, then  
$$
\|\frac \partial {\partial  k }  r ^ { (N) }  ( q; \cdot ) -
\frac \partial { \partial  k }  r ^ { (N) } (q'; \cdot ) \| _ { L ^ 2
} 
\leq C ( M _0, \epsilon ) \| q - q ' \| _ { H ^ { \epsilon, \epsilon }
} . 
$$
\end{lemma}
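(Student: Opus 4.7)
I would differentiate $r^{(N)}(k)$ under the integral sign. Since $\partial_k e_k(y) = -y\, e_k(y)$, and writing $y = \overline{\bar y}$ to expose a conjugate, I would obtain
\begin{equation*}
\partial_k r^{(N)}(k) = -\frac{1}{\pi}\int_{\complexes} q(y)\, e_k(y)\, \overline{\bar y\, T_k^{2N}(\mu_1)(y)}\, dy + \frac{1}{\pi}\int_{\complexes} q(y)\, e_k(y)\, \overline{\partial_{\bar k} T_k^{2N}(\mu_1)(y)}\, dy.
\end{equation*}
The strategy is to estimate each piece in $L^2(dk)$ by rewriting the two integrands via Lemmas \ref{t2ny} and \ref{t2nd}, respectively, and then applying the $L^p$-estimates of Proposition \ref{mess}. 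The index $N$ will be chosen large (depending on $\epsilon$) so that every factor that ultimately appears has enough $L^{p'}$-decay in $k$.

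For the first integral, I would apply Lemma \ref{t2ny} with $n = 2N$ to expand $\bar y\, T_k^{2N}(\mu_1)(y)$ as a finite sum of products $\tilde T_k^{j-1}(1)(y)\cdot G_j(k)^{*(j-1)}$ (plus a single tail term $\tilde T_k^{2N}(\bar\cdot\, \mu_1)(y)$), where $G_j(k) = \frac{1}{2\pi}\int q(w) e_k(w) T_k^{2N-j}(\mu_1)(w)\, dw$. Substituting back and carrying out the $y$-integration then represents this first piece as a finite sum of products of two functions of $k$: one is of the shape $\int q(y) e_k(y)\, \overline{\eta(y)\,\tilde T_k^{j-1}(1)(y)}\, dy$, which is an $r$-type quantity that lies in $L^2(dk)$ by the argument of Proposition \ref{term}; the other is the product $G_j(k)^{*(j-1)}$, which for $N$ large is uniformly bounded in $k$ with polynomial decay by Proposition \ref{mess} parts a) and b). Since $|\tilde q|=|q|$, all $L^p$-estimates for $\tilde T_k$ are inherited from those for $T_k$.

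For the second integral, I would use Lemma \ref{t2nd} (and complex conjugation) to decompose $\partial_{\bar k} T_k^{2N}(\mu_1)$ into $\frac{1}{2}\bar r(k)\, T_k^{2N}(\mu_2)$ plus a sum of products $T_k^{2j-1}(1)\cdot B_j(k)$ with $B_j(k) = \frac{1}{2\pi}\int \bar q(w) e_k(-w) T_k^{2N-2j}(\mu_1)(w)\, dw$. After substitution into the outer $y$-integral the first piece becomes $\frac{1}{2} r(k)$ times an $r^{(N)}$-type bounded function of $k$ (Proposition \ref{mess} part a), while $r\in L^2$ by Proposition \ref{term} together with Lemma \ref{rdl}); the remaining pieces are products of $r_j$-type $L^2(dk)$-functions (by Proposition \ref{term}) against the bounded, decaying factors $B_j(k)$ (by Proposition \ref{mess} parts a) and b)). Each resulting product of an $L^2(dk)$ factor with an $L^\infty(dk)$ factor lies in $L^2(dk)$.

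The local Lipschitz continuity is then obtained by noting that every term in the resulting representation is multi-linear in $q$, $\bar q$, and $\mu_1$; one applies Lemma \ref{mlllc} combined with the Lipschitz bounds of Proposition \ref{MUEX} for $\mu_1$ and of parts a'), b'), c') of Proposition \ref{mess}, together with the continuity of $r$ and $r_j$ in $q$ provided by Lemma \ref{rdl} and Proposition \ref{term}. The main obstacle is bookkeeping: one must choose $N$ (and the truncation index in Lemma \ref{t2ny}) large enough that for every factor Proposition \ref{mess} yields an $L^{p'}$-bound with enough decay in $k$ to compensate the potentially non-decaying $r(k)$-type factor it is paired against, and one must verify that the operators $\tilde T_k$ built from the rotated potential $\tilde q=\eta q$ enjoy the same $L^p$-mapping properties as $T_k$—which reduces to the observation $|\tilde q|=|q|$.
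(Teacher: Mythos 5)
Your plan follows the paper's proof almost exactly: differentiate under the integral, use Lemma \ref{t2ny} to commute the weight $\bar y$ through the iterates and Lemma \ref{t2nd} for the $\partial/\partial \bar k$ derivative, and then pair an $L^2(dk)$ factor against a bounded, decaying factor in each resulting product. However, there is a concrete gap in the step where you assert that every ``$r$-type quantity'' of the form $\int q\, e_k\, \overline{\tilde T_k^{j-1}(1)}\,dy$ lies in $L^2(dk)$ ``by the argument of Proposition \ref{term}.'' That proposition, via the form estimates (\ref{FormA})--(\ref{FormC}), only covers expressions with an odd number $2j+1\geq 3$ of copies of $q$ (or the pure Fourier transform $r_0=\hat q$). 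The lowest-order factor in your expansion, which is bilinear in $q$ (one application of $T_k$ inside the integral), is exactly the endpoint case excluded from Theorem \ref{forms}: for $N=1$ the set $E_1$ degenerates and the all-$L^2$ point sits on the boundary of the matroid polytope, so $L^2(dk)$ membership fails. The paper handles this with the separate Lemma \ref{A2}, which places the bilinear factor only in $L^{s_r}$ for some $s_r>2$, and then recovers $L^2$ for the \emph{product} by exploiting the $\langle k\rangle^{-2}$ decay of the companion factor coming from (\ref{AFR}). Without this device your terms $I_2$ and $III_1$ are not controlled. (The term with zero applications of $T_k$ is fine: it is $\tfrac12(q\bar\eta)\hat{\ }$ and Plancherel applies.)

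A secondary issue: you propose applying Lemma \ref{t2ny} with truncation index $n=2N$, which produces the tail $\tilde T_k^{2N}((\bar\cdot)\mu_1)$. Since $\mu_1$ tends to $1$ at infinity and $q$ carries only the weight $\langle\cdot\rangle^\epsilon$ with $\epsilon<1$, the function $\bar y\, q(y)\mu_1(y)$ appearing in the first application of $\tilde T_k$ need not lie in any usable $L^p$ space, and the tail cannot be estimated. The paper stops at $n=N$, so that the tail is $\tilde T_k^N((\bar\cdot)T_k^N(\mu_1))$ and the weight $\bar y$ is absorbed by the polynomial decay $T_k^N(\mu_1)\in L^{\tilde p}_{N\epsilon}$ furnished by part c) of Proposition \ref{mess} once $N\epsilon\geq 1$. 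You flag the truncation index as a parameter to be chosen, but the correct choice goes in the opposite direction from the one you wrote down.
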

\begin{proof}
We differentiate $  r ^ { (N) } $ and obtain 
\begin{align*}
\frac { \partial } { \partial  k } r ^ { (N) } (k) & = -\frac 1 \pi
\int _ { \complexes } q ( y) e _ k (y) y \bar T _ k ^ { 2N } ( \mu _1
) ( y ) \, dy 
+ \frac 1 \pi \int _ {\complexes} q (y) e _ k (y)  \frac \partial { \partial  k }
\bar T _k ^ { 2N } ( \mu _1 ) (y) 
\,dy  \\
&= \sum _ { j =1 } ^ N  I _j + II + \sum _ { j =1 } ^ N  III_j  + IV
\end{align*}
with 
\begin{multline*}
I _ j = - \frac 1 \pi \int _ { \complexes }  q ( y  ) \bar \eta (y) 
e _ k (y ) ( \tilde T _ k ^ { j -1 } ( 1 ) (y ) ) ^ * \, dy \\ 
\times   (
\frac 1 { 2 \pi } \int _ \complexes  q (y ) e _ k ( y ) T _ k ^ { 2N - j } ( \mu _1
) ( y) \, dy ) ^ { *j }  = A_j B _j
\end{multline*}
$$ 
II = - \frac 1 \pi \int _ { \complexes } q (y) \bar \eta (y) e _k (y)
( \tilde T _ k ^ N (( \bar \dotarg  ) T_k ^ N ( \mu _1)))^ * (y) \, dy. 
$$
$$
III_j = \frac 1 { 2 \pi ^ 2 } \int_\complexes q (y) e _k (y)  ( T_k ^ { 2j -1 }
(1) (y) ) ^ * \, dy \cdot \int _{\complexes} \bar q (y )  e _ k ( y ) T _ k ^
{ 2N - 2j } ( \mu _ 1) ( y ) \, dy  = C_j D _j .  
$$
 Finally,  we define $ IV$ by 
$$
IV =  \frac 1 { 2\pi }  r ( k ) \int_\complexes q (y) e _k (y ) ( T_k ^ { 2N}(
\mu _2 ) ( y ) ) ^ * \, dy . 
$$
To obtain this representation we use  
 Lemma \ref{t2nd} and then 
 Lemma \ref{t2ny} to commute $y$ through $ N $
applications of $ T_k$.

We proceed to show that each of the terms  $I _j$, $II$, $III_j$, and
$IV$ are in $L ^ 2$. 
To estimate the terms $ I _ j = A_j \cdot B_j$, we first suppose that
$ j \geq 3$. Then estimate (\ref{FormA}) or (\ref{FormB}) gives that $
A _ j $ is in $L^2$. Since $ 2N - j \geq N$, we may choose $N$ large
so that part a) of Proposition \ref{mess} gives $ T _ k ^ { 2N -j } (
\mu _1 ) \in L^{p'}$ for $ 1/ p ' = 1 /2 - \epsilon /4$, say.
Since we also have    $q \in L^p$, 
we conclude  that $ B_j $ is in $L^ \infty$. For the case $j =2$, we use Lemma
\ref{A2} to conclude that  $A_2$ is in $L^p$ for some $p > 2$ while by
(\ref{AFR}), we may choose $N=N(\epsilon)$ so that  $|B_2 (k) | \leq
\langle k \rangle ^ { -2} $ and thus the product $ A_2\cdot B _2$ is
in $L^2$. Finally, $ A_1 = \frac 1 2 ( q \bar \eta ) \hat {} $ and
hence lies in $L^2 $ by Plancherel's theorem.  

For the term $II$, we begin by using (\ref{AFR}) and Proposition
\ref{LPREG}  to
conclude that for $N$ large and $ 1 / \tilde p = \epsilon /4$, say,
that 
$$
\| \langle \cdot \rangle ^ { 1- \epsilon } T _k ^ N ( \mu _1 ) \| _ {
  L^ { \tilde p }} \leq C ( \| q \| _ { H ^ { \epsilon, \epsilon }} )
\langle k \rangle ^ { -2} .
$$
Then since $ \langle \cdot \rangle ^ \epsilon  q \in L^ 2$, we have 
$ \|(\bar \cdot)  q  T _k ^ N ( \mu _ 1 ) \| _ { L ^ p } \leq C (
\| q \| _ { H ^ { \epsilon, \epsilon }}) \langle k \rangle ^ { -2} $
for $ 1 /p = 1/2 + \epsilon / 4$. Now applying $\tilde T _k ^ N$ (and
possibly choosing $N$ larger) we have $ \| \tilde T _k ^ N ( ( \bar
\cdot ) T _k ^ N ( \mu _ 1 )) \| _ { L ^ { \tilde p _ 1}} \leq C ( \| q
\| _ { H ^ { \epsilon, \epsilon }} ) \langle k \rangle ^ { -2} $ for $
1/ \tilde p _ 1 = 1 /2 + \epsilon / 4$. Then  H\"older's inequality
gives 
the estimate $ |II(k) | \leq \langle k \rangle ^ { -2} $ and hence
that $II$ lies in $L^2$. A similar strategy handles the term $ III_j =
C_j \cdot D_j$ for $ 2j \leq N$. Here $C_j$ is  in $L^2$   by
(\ref{FormA}) as long as $2j\geq 4$. The estimate (\ref{hlsn}),
estimate (\ref{embed}),  and H\"older's inequality imply that $D_j$ is
bounded.  When $ 2j=2$, we use Lemma \ref{A2} and (\ref{AFR}) as in
the estimate for $I_2$.

Next we consider $ III_j$ when $ 2j \geq N$. Here we argue as in Lemma
\ref{rdl} 
to conclude that $ D_j$ is in $L^2$ and since $ 2j \geq N$, we may
choose $N$ large so that (\ref{hlsn}) and H\"older's inequality imply
$C_j$ is bounded. 

Finally, we show the term $IV$ is in $L^2$. We first recall that
Astala, Faraco, and Rogers \cite[Theorem 3.4]{MR3382582}  show that the map $ q
\rightarrow {\cal R}(q)$ maps $H^ {
  \epsilon, \epsilon}$ to $L^2$ and is locally Lipschitz
continuous. With this result, we only need to 
show that the map 
\begin{equation} \label{Done} q \rightarrow  \int_\complexes q (y) e _k (y ) ( T_k ^ { 2N}(
\mu _2 ) ( y ) ) ^ * \, dy 
\end{equation}
takes $H^ { \epsilon , \epsilon}$ into $L^ \infty$. From the estimate
(\ref{embed}), we have that $ q \in L^p$ for some $p >2$ while part a)
of Proposition \ref{mess} and (\ref{IE1B}) imply  that for $N$ large, we have $T_k ^
{ 2N} (\mu_ 2 ) $ in the dual space, $L^{p'}$.  It follows that 
$\int_\complexes q (y) e _k (y ) ( T_k ^ { 2N}(
\mu _2 ) ( y ) ) ^ * \, dy$  lies in $L^\infty$.  

To complete the proof, we need to show that each of these terms is a
locally Lipschitz continuous function of $q$. This follows from
similar arguments using Proposition \ref{mess} a'-c') and Lemma
\ref{mlllc}. 
\end{proof}

\begin{lemma}\label{A2} If $ q \in L^ { s _ q}$ with $ 4/3 < s _q< 2$,
  then $ r (k) = \int _{\complexes } q (y) e _ k (y) T _ k (1) (y) \,
  dy $ lies in $ L^ { s _ r}$ where $ 1/s_r = 3/2 - 2 / s _q$ and we
  have the estimate
$$
\| r\| _ { L ^ { s_r} } \leq C \| q \| _ { L^ { s _ q }} ^2.
$$
\end{lemma}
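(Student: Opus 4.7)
My plan is to estimate $r(k)$ by duality against test functions $t \in L^{s_r'}$. Substituting the formula $T_k(1)(y) = \frac{1}{2\pi}\int e_k(z)q(z)/(y-z)\,dz$ and applying Fubini's theorem to
\begin{equation*}
\int r(k) t(k) \, dk = \frac{1}{2\pi} \iiint \frac{q(y) q(z) e_k(y) e_k(z)}{y - z} t(k) \, dy \, dz \, dk,
\end{equation*}
I would use the identities $e_k(y) e_k(z) = e_k(y+z)$ and $\int e_k(u) t(k)\,dk = \pi \hat t(u)$ to rewrite this as
\begin{equation*}
\int r(k) t(k) \, dk = \frac{1}{2} \iint \frac{q(y) q(z)}{y-z} \hat t(y+z) \, dy \, dz,
\end{equation*}
a multilinear form in $q$, $q$, $\hat t$, and the kernel $1/(y-z)$ to which Theorem~\ref{forms} should apply directly.

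To invoke Theorem~\ref{forms}, I would take $E = \{e_0, e_1, e_0+e_1, e_0-e_1\} \subset \reals^2$ and assign $f_{e_0} = q$, $f_{e_1} = q$, $f_{e_0+e_1} = \hat t$, and $f_{e_0-e_1}(v) = 1/v$. Every two-element subset of $E$ is linearly independent, so $E$ realizes the uniform matroid $U_{2,4}$; its matroid polytope ${\cal P}(E)$ is the hypersimplex $\{\theta \in [0,1]^E : \sum_{v \in E}\theta(v)=2\}$, whose interior consists of those $\theta$ with $\sum_{v \in E}\theta(v) = 2$ and $0 < \theta(v) < 1$ for every $v$.

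I would choose exponents $1/p_{e_0} = 1/p_{e_1} = 1/s_q$, $1/p_{e_0+e_1} = 1/s_r$, and $1/p_{e_0-e_1} = 1/2$. The scaling identity $\sum_{v \in E} 1/p_v = 2$ reduces to the stated relation $1/s_r = 3/2 - 2/s_q$, and the hypothesis $4/3 < s_q < 2$ forces every coordinate into $(0,1)$, so $(1/p_v)$ lies in the interior of ${\cal P}(E)$. For the Lorentz indices I would set $r_v = p_v$ in the three function slots and $r_{e_0-e_1} = \infty$, which is consistent with $1/v \in L^{2,\infty}(\complexes)$; the condition $\sum_{v \in E} 1/r_v \geq 1$ is then satisfied since this sum equals $3/2$. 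Theorem~\ref{forms} then yields
\begin{equation*}
\left| \iint \frac{q(y) q(z)}{y-z} \hat t(y+z) \, dy \, dz \right|
\leq C\, \|q\|_{L^{s_q}}^2\, \|\hat t\|_{L^{s_r}}\, \|1/v\|_{L^{2,\infty}}.
\end{equation*}
Since $s_r \in (2, \infty)$, the Hausdorff-Young inequality yields $\|\hat t\|_{L^{s_r}} \leq C \|t\|_{L^{s_r'}}$, and taking the supremum over unit $t \in L^{s_r'}$ completes the proof by duality. The principal bookkeeping step is verifying that $(1/p_v)$ lies in the interior of ${\cal P}(E)$, which becomes transparent once one recognizes $E$ as the uniform matroid $U_{2,4}$.
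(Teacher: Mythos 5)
Your proof is correct and follows essentially the same route as the paper's: dualize against $t\in L^{s_r'}$, pass to the Fourier-side bilinear form in $q$, $q$, $\hat t$ and the Cauchy kernel, and close with a Lorentz-space Brascamp--Lieb/Young estimate together with Hausdorff--Young for $\|\hat t\|_{L^{s_r}}$ (the paper's version carries a complex conjugate on one factor, giving $\hat t(y-z)/(\bar y-\bar z)$ rather than your $\hat t(y+z)/(y-z)$, but this changes nothing since only absolute values enter). Your explicit instantiation of Theorem \ref{forms} with the uniform matroid $U_{2,4}$ and the hypersimplex check is a correct and somewhat more detailed account of the ``interpolation result of Christ and/or Janson'' step that the paper only cites.
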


\begin{proof} We will estimate $ r$ by duality and thus we choose $ t
  $ a nice function in $ L^ { s'_r}$. We write 
\begin{equation}\label{form2}
\int _{ \complexes } t (k ) r (k) \, dk 
= \frac 1 { 2 \pi ^2}\int _ {\complexes} \frac { \bar q (z) q(y) \hat
  t ( y-z) } { \bar y - \bar z } \, dy dz.
\end{equation}
We may use the Hausdorff-Young inequality and  the interpolation result of Christ \cite{MR766216} or  Janson \cite{SJ:1986} to find
that the right-hand side of  (\ref{form2}) is finite when  $
2/s_q + 1/s_r = 3/2$. Note that the condition on the second index in
the Lorentz spaces will always hold true since  $ 2/s_q \geq
2$. 
\end{proof}

We are ready to give the proof of Theorem \ref{scattered}. 
\begin{proof}[Proof of Theorem \ref{scattered}]
We write $ r (k) = {\cal R } (q) (k)  = \sum _ { j = 0 } ^ N r _ j (k)
+ r^ { (N) } (k)
$ where $ r_j$ and $ r ^ { ( N )} $ are as in (\ref{multdef}) and (\ref{remdef}).  According to
Proposition \ref{term} 
 we have  $r _j$ is in $H ^ { \beta, \alpha }$ if $ q $ is
in $H ^ { \alpha, \beta }$. 
By Lemma \ref{rdl} and Lemma \ref{smoothpart}, given $ \epsilon > 0$,
we may find 
$N=N(\epsilon)$ so that 
 $ r ^ { (N)} $ is in $ H ^ {  \beta, \alpha }$  if $ q $ is in $ H ^
{ \epsilon, \epsilon }$. 

To estimate $ r - \hat q$, we note that $ r_0 = \hat q$ and then use
Lemma \ref{forms} to conclude that  $  r _ j  \in  H ^ { 2 \beta,
  2\alpha }$ when $ q \in H ^ {\alpha, \beta}$ and  $j \geq 1$. Finally, we use Lemma \ref{rdl}  and  Lemma \ref{smoothpart} to conclude that the remainder  $ r^ {(N)}$ lies in $ H^ { 2\beta, 2\alpha}$.  This gives the second estimate of the
  Theorem. 
\end{proof}

\appendix
\section{Appendix: The matroid polytope} 
\renewcommand{\theequation}{\Alph{section}.\arabic{equation}}
\renewcommand{\thetheorem}{\Alph{section}.\arabic{equation}}

\begin{center} 

\parbox {2in}{Nathan Serpico \\ Department of Mathematics \\ 
University of Kentucky \\
Lexington KY 40506-0027} \qquad \parbox {2in}{Russell M. Brown 
\\ Department of Mathematics \\ 
University of Kentucky \\
Lexington KY 40506-0027} 

\end{center}

\label{matroids} 
We give the proof of Lemma \ref{geom} in this section. 
We let $E$ denote one of the matroids $ E_1$ or $E_2$ defined in
section \ref{bl}. 
Our strategy is to show that for any pair of vectors $(v,w) $ from  $E$, we may
find two bases $ B_1$ and $ B_2$ so that $ \{v\} = B_1 \cap B_2$ and $
\{ w \} = E \setminus ( B_1 \cup B _2)$.  Since  $ \chi _ { B_j}
\in {\cal P } (E)$ and $ {\cal P}(E)$ is convex,   it follows  that $ \Phi _ {v,w} = \frac  1 2 (
\chi _{B_1} + \chi _ { B_2}) $ lies in $ { \cal P }(E)$. We have that 
$$
\Phi _ { v,w}  ( u ) = \left \{ \begin{array}{ll} 
1 , \qquad & u = v\\
0 , \qquad &u = w \\
1/2, \qquad& \mbox{else}. 
\end{array}
\right. 
$$
It is not hard to show that $ \cal A $, the set in Lemma \ref{geom},  is the convex hull of $ \Phi _{
  v,w }$ for all pairs $(v,w)$ from $E$. 

\note{Compare the value of $N$ in case 5. } 
We first consider the matroid $E_1$ with $N \geq 2$. 
\begin{lemma} Consider a pair $(v,w) $ with elements from $E_1$. We may
  find two bases $B_1$ and $B_2$  of $ \reals ^ { N +1}$ so that $  B
  _1 \cap B_2 = \{ v\}$ and  $E_1 \setminus ( B_1 \cup B_2 ) = \{ w \}$.

\end{lemma}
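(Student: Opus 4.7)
The plan is a case analysis on the types of $v$ and $w$ in $E_1$. Begin by noting that a basis of $\reals^{N+1}$ has $N+1$ elements, so $|B_1|+|B_2|-|B_1 \cap B_2| = 2N+1 = |E_1|-1$: the conditions $B_1 \cap B_2 = \{v\}$ and $E_1 \setminus (B_1 \cup B_2) = \{w\}$ are compatible as a counting matter. Equivalently, the task is to partition $E_1 \setminus \{v,w\}$ (of size $2N$) into two $N$-element sets $S_1, S_2$ with $S_i \cup \{v\}$ a basis for each $i$. Classify the vectors of $E_1$ into three types: standard basis vectors $e_j$, differences $d_i := e_{i-1} - e_i$, and the alternating sum $\zeta$. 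This yields up to nine type-pairs for $(v,w)$, some of which reduce via the index-reversal symmetry $e_j \mapsto e_{N-j}$, which preserves $E_1$ setwise.

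The key linear-algebraic building blocks are:
(a) $\mspan(d_1,\ldots,d_N) = H := \{x \in \reals^{N+1}: \sum_i x_i = 0\}$, so $\{d_1,\ldots,d_N, e_j\}$ is a basis for every $j$;
(b) $\{\zeta\} \cup (\{e_0,\ldots,e_N\} \setminus \{e_j\})$ is a basis for every $j$, since $\zeta$ has nonzero coefficient on $e_j$; and
(c) more generally, removing a subset of the $d_i$'s splits $\{0,\ldots,N\}$ into blocks, and a mixed collection completes to a basis iff the included $e_j$'s and $\zeta$ collectively violate the ``coordinate-sum-zero-on-each-block'' constraints. Using these, I would dispatch the easy cases directly. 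For $(v,w) = (e_j,e_k)$ take $B_1 = \{\zeta\} \cup (\{e_0,\ldots,e_N\} \setminus \{e_k\})$ and $B_2 = \{d_1,\ldots,d_N, e_j\}$; then $B_1 \cap B_2 = \{e_j\}$ and $B_1 \cup B_2 = E_1 \setminus \{e_k\}$. For $(v,w) = (e_j,\zeta)$ choose any $k \neq j$ and set $B_1 = \{e_0,\ldots,e_N\}\setminus\{e_k\}$, $B_2 = \{d_1,\ldots,d_N, e_j\}$. For $(v,w) = (e_j,d_m)$ use $B_1 = \{\zeta\}\cup(\{e_0,\ldots,e_N\}\setminus\{e_l\})$ and $B_2 = \{e_j,e_l\}\cup(\{d_1,\ldots,d_N\}\setminus\{d_m\})$, where $l \neq j$ is chosen so that $\{j,l\}$ straddles the index $m$; criterion (c) then guarantees $B_2$ is a basis, and such an $l$ exists whenever $N \ge 2$. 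Cases where $v$ is of $d$-type are handled by dual constructions.

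The main obstacle is the pair $(v,w) = (d_m,d_n)$, together with the $\zeta$-meets-difference cases. For $(d_m,d_n)$ one splits the remaining $N-2$ differences between $B_1$ and $B_2$, placing $d_m$ in both and $d_n$ in neither, and fills each side with an appropriate selection of $e_j$'s and possibly $\zeta$ dictated by criterion (c). A consecutive-block partition of $d$-indices succeeds in most subcases, but when a parity condition on $N-n$ forces $\zeta$ to lie in the span of the chosen $d$'s and $e$'s on one side, one must instead place $\zeta$ on the opposite side, where the residual coordinate-sum relation it carries is incompatible with that side's block structure. A similar parity analysis governs $(d_m,\zeta)$ and $(\zeta,d_m)$, where the independence of $\{d_1,\ldots,d_N,\zeta\}$ depends on the parity of $N$. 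Once the correct allocation of $\zeta$ is selected in each subcase, verifying that the proposed $B_1, B_2$ are bases with the desired intersection and union is a direct application of (a)--(c).
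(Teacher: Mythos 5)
Your framework coincides with the paper's: a case analysis on the types of $v$ and $w$, with candidate bases assembled from $S=\{e_0,\dots,e_N\}$, the differences $D=\{e_{i-1}-e_i\}$, and $\zeta$. Your building blocks (a)--(c) are sound, with the caveat that (c) in the presence of $\zeta$ really amounts to checking invertibility of the block-sum matrix, since the alternating sum of $\zeta$ over an even-length block vanishes; "violate the constraints" is too loose to be applied mechanically. The cases you do write out are essentially the paper's Cases 1 and 2 and they check out, except for one slip: for $(v,w)=(e_j,\zeta)$ your $B_1=\{e_0,\dots,e_N\}\setminus\{e_k\}$ has only $N$ elements and would in any event leave $e_k$ outside $B_1\cup B_2$; the correct (and easier) choice is $B_1=S$, $B_2=D\cup\{e_j\}$.

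The genuine gap is that the cases you yourself flag as the main obstacle are never carried out. For $v$ of $d$-type against $e_k$, against another difference, or against $\zeta$, and for $v=\zeta$ against $d_m$, you describe a strategy ("dual constructions," "a parity analysis governs\dots," "once the correct allocation of $\zeta$ is selected") but exhibit no bases; and the type-pair $v=\zeta$, $w=e_k$ is omitted from your enumeration altogether. Since the entire content of this lemma is the explicit exhibition and verification of $B_1,B_2$ in every case, this is exactly where the proof must live. The paper's treatment also shows these cases are not uniform: for $v=\zeta$, $w=e_k$ the generic construction needs $k<N-1$ after possibly reversing indices, and the subcase $N=2$, $k=1$ requires a separate ad hoc pair of bases (likewise for $v=\zeta$, $w=e_k-e_{k+1}$) --- a low-dimensional degeneracy your outline does not anticipate. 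To complete the argument you must write down and verify explicit bases for each of the five remaining type-pairs, including these exceptional small-$N$ subcases.
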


\begin{proof} Our proof is not particularly clever. We consider a
  number of 
  cases and list the bases in each case.  In several of the cases, we
  will make use of the map $ e _ k \rightarrow e _ { N -k } $ which preserves the elements of $E_1$, at  least up to a sign. This will allow us to assume that the vectors  satisfy certain extra conditions. We let $ S = \{ e _0, e_1,
  \dots , e_N\}$,  $D = \{ e _0 -e _1, \dots, e_{N-1}-e _N\}$, and $
  \zeta = e _ 0 - e_1+ \dots + ( -1)^Ne _N$. 

{\em Case 1: }  $ v = e _k $ and $ w = e_\ell$. In this
case we may let $ B _ 1 = (S\setminus \{ e _ \ell \} ) \cup \{ \zeta
\}$ and $ B _ 2 = D \cup \{ e _ k \}$. 

{\em Case 2: }  $ v = e _ k $ and $ w = e_ { \ell -1 } -e _
\ell$. We choose $ \hat e = e_N$ if $ k \leq \ell-1$ or $ \hat e = e
_0 $ if $ k \geq \ell$. Then we let  $ B_ 1 = ( S \cup \{ \zeta \} )
\setminus  \{ \hat e \} $ and $ B _ 2 = ( D \setminus \{ e _ { \ell -1
} - e_ \ell  \} )\cup \{ e _k , \hat e \}$.  

{\em Case 3: }  $ v = e _ { \ell -1 } - e _ \ell $ and $ w = e _k$.
{Subcase A: } $k \neq \ell -1$ or $\ell$.  Then  $ B_1 = \{ \zeta , e
_ { \ell -1 } - e _ \ell \} \cup ( S \setminus \{ e_k, e _ { \ell -1 }\})$
and let $ B_2 = D \cup \{ e _ { \ell -1 }\}$. 

Subcase B: If $ k = \ell -1 $ or $ \ell$, then choose $\hat e= e_m$ with $ 
m \neq \ell-1 $ or $ \ell$ and set $ B_1 = \{ \zeta , e _ { \ell -1 } -
e_ \ell \} \cup (S \setminus \{ \hat e , e _k\})$ and  $ B_2 = D \cup
\{ \hat e \}$. 

{\em Case 4: } $ v = e _ { k-1 } - e_k$ and $ w = e _ { \ell -1 } - e _ \ell
$. In this case we may flip the order and assume $ k < \ell$ if
necessary. We let $ B _1 = \{ \zeta , e _ { k -1 } - e _k \} \cup ( S
\setminus \{ e _ { k -1 } , e_ \ell \}) $ and $ B _2 = (D \setminus \{
e _ {\ell  -1 } - e _ \ell \}) \cup ( \{ e _ { k -1 } ,e _ \ell \})$. 

{\em Case 5: }$ v = \zeta $ and $ w = e _k$. In this case, we may
assume, 
after possibly flipping the order of the coordinates,  that $ k < N-1$, provided 
 that $ N \geq 3$. 
In this case, we let $ B_1 = \{ e _ 0-e_1, \dots, e _k - e _ { k +1 },
e _ { k+1 }, \dots, e _ { N -1 } , \zeta \}$ and $B _ 2 = \{ e _ 0 ,
\dots ,  e _ { k -1}, e _N, e _ { k +1 } - e_ { k +2} , \dots , e_{ N -1 }
- e_N, \zeta \} $.   The case $k =1$ and $N=2$ must be handled
separately and in this case we can let $ B_1 = \{ e_0, e_2, \zeta\}$
and $ B_2 = \{ e _ 0-e_1, e_1-e_2, \zeta\}$.

{\em Case 6: } $ v = \zeta $ and $ w = e _ k - e _ { k +1} $. Again,
we may assume that $ k < N-1$. We let $ B _1 = \{ e _ 1, \dots , e _ {
  k +1 } ,  e _ { k+1}- e _ { k+2} , \dots, e _ { N-1} -e _N , \zeta
\}$ and $ B _2 = \{ e _ 0, e _ { k +2 },\dots , e _N , e _ 0-e _1,
\dots, e _ { k -1 } -e _ k, \zeta \}$.  Again, the case $k =1$ and
$N=2$ must be handled separately and for this case, we can let $ B _1 = \{
e_0, e _2, \zeta\}$ and $ B_2 = \{ e _ 1, e_0- e_1, \zeta\}$. 

{\em Case 7: } $ v = e _ { k -1 } - e _k  $ and $ w = \zeta $. In this
case we  let $B_ 1 = ( S \setminus \{ e _k \} ) \cup \{ e _ { k -1 } -
e _k \} $ and $ B _2 = D \cup \{ e _k \}$.

\end{proof}

We let  $E_2$ denote the set of vectors  defined for $N \geq 1$  before Lemma
\ref{geom} and establish a similar result. 

\begin{lemma} Consider a pair  of vectors $ (v, w  ) $ from $E_2$.  We
  may find a pair of bases $ B_j$, $j=1,2$ so that $ B_ 1 \cap B_2 =
  \{ v\}$ and $ (E_2 \setminus B_1) \cap (E_2 \setminus B_2 )= \{
  w\}$. 
\end{lemma}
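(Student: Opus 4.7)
The plan is to proceed exactly as in the $E_1$ case: enumerate all possible pairs of types for $(v,w)$ and exhibit explicit bases $B_1, B_2$ in each case. First I would fix notation for the four structural classes of elements of $E_2$: the standard basis vectors $S = \{e_0, \ldots, e_{2N}\}$, the left alternating partial sums $L_k = \sum_{j=0}^{2k-1}(-1)^j e_j$ for $k=1,\ldots,N$, the right alternating partial sums $R_k = \sum_{j=0}^{2k-1}(-1)^j e_{2N-j}$ for $k=1,\ldots,N$, and the full alternating sum $\zeta$. The coordinate reversal $e_j \leftrightarrow e_{2N-j}$ is a symmetry of $E_2$ that swaps $L_k \leftrightarrow R_k$ and fixes $\zeta$; this immediately cuts the total case count roughly in half.

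The key reusable bases are: (i) $S$ itself; (ii) $(S \setminus \{e_i\}) \cup \{\zeta\}$, which is a basis for any $i$ since $\zeta$ has a nonzero coefficient in every standard basis direction; and (iii) a ``staircase'' basis built from the partial sums plus a suitable seed. Concretely, I would check that $\{L_1, L_2 - L_1, L_3 - L_2, \ldots\} = \{e_{2k-2} - e_{2k-1} : k = 1, \ldots, N\}$ and similarly $R_k - R_{k-1} = e_{2N-2k+2} - e_{2N-2k+1}$, so collections of $L_k$'s and $R_k$'s generate various difference patterns; together with enough individual $e_i$'s they form a basis, and one has good control over which elements of $E_2$ are used. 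Replacing a carefully chosen element by $\zeta$ in such a collection yields a second basis sharing exactly one element with the first.

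With these building blocks, I would organize the proof into the $4 \times 4$ case analysis over the classes of $v$ and $w$. Using the $L \leftrightarrow R$ symmetry I would reduce to essentially ten cases: $(e_i, e_j)$, $(e_i, L_k)$, $(L_k, e_i)$, $(L_k, L_\ell)$, $(L_k, R_\ell)$, $(e_i, \zeta)$, $(\zeta, e_i)$, $(L_k, \zeta)$, $(\zeta, L_k)$, and $(\zeta, \zeta)$ (the last being vacuous). Many of these mimic the cases already appearing in the $E_1$ argument, with $L_k, R_k$ playing the role of the differences $e_{k-1}-e_k$, and can be handled by the same kind of swap: place $v$ in the two bases by starting from $S$ or $S \cup \{\zeta\} \setminus \{e_i\}$, then remove exactly the redundant vectors that would bring $w$ into their union.

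The main obstacle will be handling the small-dimension edge cases, especially $N=1$ (where $E_2$ has only $6$ elements and many of the generic constructions collapse), together with the mixed cases where $v$ is an $L_k$ and $w$ is an $R_\ell$ far from the boundary: here one must ensure both the $L$ and $R$ ``staircases'' can be truncated so as to simultaneously include $v$ in both bases and exclude $w$ from both, which may require interleaving elements of $S$ around the index $2N$ to fill out the dimension count. As in the author's $E_1$ argument, no deeper idea is needed beyond careful bookkeeping, but the verification of each chosen pair as an actual basis of $\reals^{2N+1}$ will have to be checked directly by row-reducing the corresponding coefficient matrix or invoking the staircase observations above.
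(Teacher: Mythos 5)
Your plan matches the paper's proof strategy exactly: a case analysis over the types of $(v,w)$, exploiting the reversal symmetry $e_j \mapsto e_{2N-j}$, and built around the reusable bases $(S \setminus \{e_i\}) \cup \{\zeta\}$ and $D \cup \{e_j\}$, where $D$ is the full set of partial alternating sums (your $L_k$'s and $R_k$'s). Your staircase observation---that the differences $L_k - L_{k-1}$ and $R_k - R_{k-1}$ form a spanning path on $\{0,\dots,2N\}$---is the right reason these collections are bases. As written, though, this is an outline rather than a proof: no explicit $B_1$, $B_2$ are exhibited for any case, and for a lemma of this type the explicit constructions are essentially the whole content. Two of your anticipated obstacles also do not materialize. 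The paper's $E_2$ constructions work uniformly for all $N \geq 1$ with no separate $N=1$ treatment (it is the $E_1$ argument that needs special $N=2$ subcases). And the mixed case where $v$ and $w$ are both partial sums needs no interleaving around the index $2N$: after using the reversal symmetry to put $w = d_2$ in $L$-type form, the paper simply takes $B_1 = (S \setminus \{e_0, e_{2N}\}) \cup \{\zeta, d_1\}$ and $B_2 = (D \setminus \{d_2\}) \cup \{e_0, e_{2N}\}$, which works for every $N \geq 1$.
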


\begin{proof} Again the proof is by cases. We let $ S = \{ e_0, e_1,
  e_2, \dots e _ { 2N} \}$, $ D = \{ e _ 0-e_1, \dots, e_0-e_1+\dots -
  e _{ 2N-1}, e_{2N}-e_{2N-1}, \dots , e_{2N}-e_{2N-1}+\dots -e _1 \}$, and set $ \zeta = e _0- e_ 1 + \dots + e_{ 2N}$. 

{\em Case 1: } $v = e_j $ and $ w = e _k$.    
Let $ B_ 1 =( S \setminus \{ e_k \} ) \cup \{ \zeta \} $ and $ B_2
= D \cup \{ e_j \}$. 

{\em Case 2: } $ v = e _j $ and $ w = \zeta$.
Let $ B_1 = S$ and $ B_2 = D \cup \{ e _j \}$. 

{\em Case 3:  } $v = \zeta $ and $ w = e _j$.
Let $B_1 = (S\setminus\{ e _j\} )  \cup \{ \zeta \} $ and $B_2 = D \cup \{
\zeta\}$.

{\em Case 4: } $ v = e _j$ and $ w =d \in D$.  If necessary, we may
apply the 
transformation, $ e _k \rightarrow e _ { 2N-k} $ so that the vector
$d$ has the form $ d= e_0-e _1+ \dots -e _k$. Now we choose a vector
$e _ \ell$ which depends on $ e_j$ with $ \ell$ defined as follows:
$$ 
e_\ell = \left \{ \begin{array}{ll} e_{2N} , \qquad &  j = 0 , \\ 
e_0, \qquad & \mbox{else}.
\end{array}\right .
$$
Then we let the bases be $ B _1 = ( S \setminus \{ e _ \ell \}) \cup
\{ \zeta \}$ and $ B _ 2 = (D\setminus \{ d \} ) \cup \{ e _j , e_ \ell
\}$. 

{\em Case 5: } $ v = d\in D$ and $ w = e _j $. 
We may assume that $ d = e_0-e_1+ \dots - e _k$, as above.  We choose
a vector $ e_\ell$ to be $ e_\ell = e_{ 2N}$ if $ j \leq k$ and $ e_
\ell = e _0 $ if $ j > k$. Then we put $ B_1 = ( S \setminus \{ e _j ,
e_\ell \}) \cup \{ \zeta , d\}$ and $ B_2 = D \cup \{ e_ \ell \} $. 

{\em Case 6: }  $ v = d _1 \in D$ and $ w = d _2 \in D$.
If necessary, we may apply the transformation,  $ e_j \rightarrow e _{
  2N-j }$ 
and assume that 
$ d_2 = e _ 0-e _1+\dots  -e_k$. We put $ B_1 = (S \setminus \{ e _0, e_{2N} \}
) \cup \{ \zeta, d_1 \} $ and $B _2 = ( D \setminus \{ d _2 \} ) \cup
\{ e_0, e_{ 2N}\} $.

{\em Case 7: } $v = d\in D$ and $ w = \zeta $.    By flipping the
vectors, we may assume that $d= e_0 - \dots - e _k$.  
 We put $ B_ 1 = (S \setminus \{ e_k \} ) \cup \{ d\}$ and  $
 B_2 = D \cup \{ e_k \}$.  

{\em Case 8: } $ v= \zeta $ and $ w = d \in D$. Again, we may assume
that $ d = e _0 -e _1 +\dots - e _k$. We let $ B_1 = (S \setminus \{ e
_ k \} ) \cup \{ \zeta \}$ and $ B _2 =(  D\setminus \{ d \} ) \cup \{ \zeta \} \cup \{ e _
k \}$. 

\end{proof}


\def\cprime{$'$} \def\cprime{$'$} \def\cprime{$'$} \def\cprime{$'$}
  \def\cprime{$'$} \def\cprime{$'$} \def\cprime{$'$} \def\cprime{$'$}
  \def\cprime{$'$} \def\cprime{$'$}

\noindent \small \today

\end{document}